\documentclass[10pt]{amsart}

\usepackage{amsmath,amsfonts,amssymb, color}
\usepackage{tikz}
\usepackage{caption} 
\usepackage{enumerate}

\theoremstyle{plain}
\newtheorem{thm}{Theorem}[section]
\newtheorem{lem}[thm]{Lemma}
\newtheorem{corollary}[thm]{Corollary}
\newtheorem{prop}[thm]{Proposition}

\newtheorem*{claim*}{Claim}
\newtheorem*{thm*}{Theorem}

\def\R{\mathbb{R}}

\def\Q{\mathbb{Q}}
\def\Z{\mathbb{Z}}

\def\M{\mathcal{M}}
\def\V{\mathcal{V}}
\def\H{\mathcal{H}}
\def\dH{\dim_{\mathcal{H}}}

\newcommand{\supp}{\operatorname{supp}}

\newcommand{\Span}{\operatorname{Span}}
\begin{document}
\title{On the positive Lebesgue measure of $(n,k,t)$-sets}
\author{Longhui Li}
\address{Department of Mathematics, Southern
University of Science and Technology, Shenzhen, 518055, PR China}
\email{12231267@mail.sustech.edu.cn}

\begin{abstract}
We study sets in $\R^{n}$ that contain a unit $k$-dimensional disk in
each direction from a subset of the Grassmannian $G(n,k)$ of Hausdorff
dimension at least $t$; we call them $(n,k,t)$-sets. In the range $2k>n$, we prove that every
$(n,k,t)$-set has positive Lebesgue measure whenever $t>k(n-k-1)+1$. This follows from a sharp Sobolev to $L^{2}$ estimate for the
$k$-plane maximal operator over a Frostman measure on $G(n,k)$, which also
yields an $L^{p}$--$L^{2}$ estimate for every $ p>n/k$ with respect to Haar measure. In the range $2k\le n$, where translations can no longer be ignored, we obtain the
same threshold via a lifting argument, conditional on the quantitative
Kakeya estimate in dimension $n-k+1$. Using the quantitative Kakeya estimate of Wang and Zahl in dimension 3, we deduce
that $(4,2,t)$-sets have positive measure for $t>3$.
The threshold $t>k(n-k-1)+1$ is sharp, in the
sense that $(n,k,t)$-sets of measure zero exist at $t=k(n-k-1)+1$.  These
results improve the previous threshold $t>n/2+k(n-k-1)$ of Fraser, Harris and
Kroon, and give the first restricted-direction results in the
range $2k\leq n$.

\end{abstract}
\maketitle
\section{introduction}
\subsection{$(n,k)$-sets}
A Kakeya set is a compact set $K\subset\R^n$ that contains
a unit line segment in every direction. Besicovitch \cite{Bes1928} showed that Kakeya sets of Lebesgue measure zero exist for $n\geq2$. 

It is natural to ask similar questions in higher dimensions, where we replace unit line segments by unit disks. Let $G(n,k)$ be the Grassmannian, i.e., the set of $k$-dimensional subspaces in $\R^n$ equipped with the metric
\begin{align*}
    d(V,W)=\|\pi_V-\pi_W\|,\;V,W\in G(n,k),
\end{align*}
where $\pi_V$ is the orthogonal projection onto $V$ and $\|\cdot\|$ is the operator norm.  

Suppose $k,n$ are positive integers with $k<n$. We say that $B\subset \R^n$ is an $(n,k)$-set if $B$ contains a translation of the unit disk of every element in $G(n,k)$. Kakeya sets are $(n,1)$-sets, and Besicovitch \cite{Bes1928} constructed Kakeya sets of measure $0$. The main conjecture is that $(n,k)$-sets have positive Lebesgue measure for $k\geq 2$.
Marstrand \cite{Mar79} showed that $(3,2)$-sets have positive Lebesgue measure, and Falconer \cite{Fal80} showed that $(n,k)$-sets have positive Lebesgue measure for $k>n/2$. There are many other proofs for Falconer's result \cite{Fal80}, one can see, e.g.,  Peres-Schlag \cite{PS00}, Fraser-Harris-Kroon \cite{Fra22}, and Di Plinio-Parissis \cite{PP22}.

For the range $k\leq n/2$, Bourgain \cite{Bou91} improved Falconer's result to $2^{k-1}+k\geq n$. Oberlin \cite{Obe05} then gave a simple proof of Bourgain's result. Later, Oberlin \cite{Obe10} improved it to $(1+\sqrt{2})^{k-1}+k>n$, which is the best result so far.

Over the $p$-adic field $\mathbb{Q}_p$, by Kakeya maximal estimates for $(\Z/(N\Z))^n$ proven in \cite{Dha22} and the lifting argument given by Oberlin \cite{Obe05}, Dhar \cite{Dha23} showed that $(n,k)$-sets in $\Q_p^n$ have positive measure when $k\geq 2$.

\subsection{$(n,k,t)$-sets} 

We know that $(n,k)$-sets have positive Lebesgue measure for $k\gtrsim \log n$, and the problem becomes easier as $k$
increases. So if $k$ is large, we do not need to assume that a set $B$ contains a translation of the unit disk of \textbf{every} element in $G(n,k)$ to ensure $B$ has positive Lebesgue measure.  We can restrict to a smaller subset of $G(n,k)$.

Suppose $k,n$ are positive integers and  $\V\subset G(n,k)$. We say that $B\subset \R^n$ is an $(n,k,\V)$-set if $B$ contains a translation of the unit disk of every element in $\mathcal{V}$.

It is natural to ask under what conditions on $\mathcal{V}$ $(n,k,\V)$-sets can have positive Lebesgue measure.

 In \cite{Fra22}, Fraser-Harris-Kroon consider an abstract scaling condition and obtain some results. In this article, we will focus on the Hausdorff dimension of $\mathcal{V}$, and try to give a lower bound for the Hausdorff dimension of $\mathcal{V}$ for which $(n,k,\V)$-sets have positive Lebesgue measure. 

Let $\H^\alpha$ be the $\alpha$-dimensional Hausdorff measure, and $\H^\alpha_{\infty}$ be the $\alpha$-dimensional Hausdorff content. When $\alpha$ is the dimension of the ambient space, $\H^\alpha$ is just the Lebesgue measure, and we also use the notation $|\cdot|$. We use the notation $\dim A$ to denote the Hausdorff dimension of the set $A$. Since only Hausdorff dimension is used in this article, no confusion should arise. 

We can now define $(n,k,t)$-sets. Suppose $k,n$ are positive integers. We say that $B\subset \R^n$ is an $(n,k,t)$-set if there exists $\V\subset G(n,k)$ with $\dim \V\geq t$ and $B$ contains a translation of the unit disk of every element in $\mathcal{V}$.

     D. M. Oberlin \cite{DMObe07} showed that $(n,n-1,t)$-sets have positive measure if $t>1$. This dimensional exponent is in fact sharp. Recently, Fraser-Harris-Kroon \cite{Fra22} showed that $(n,k,t)$-sets have positive Lebesgue measure for $t> n/2+k(n-k-1)$ and $2k>n$.  One can also follow Falconer's idea \cite{Fal80} to obtain this result. 

\subsection{Maximal Operators} To show the positive measure of $(n,k,t)$-sets, we consider the maximal operators.
    Suppose $f$ is a measurable function with bounded support. For $\delta>0$, we define
\begin{align}
    \mathcal{M}_{\delta}^{n,k}(f)(V)=\frac{1}{\delta^{n-k}}\cdot\sup_{x\in \R^n}\int_{\mathcal{N}_{\delta}(x+V)}|f|, \;V\in G(n,k)
\end{align} 
and
    \begin{align}
    \mathcal{M}^{n,k}(f)(V)=\sup_{x\in \R^n}\int_{x+V}|f|d\mathcal{H}^k, \;V\in G(n,k).
\end{align}

When $k=1$, the operator $\mathcal{M}^{n,1}$
is unbounded because Kakeya sets of zero Lebesgue measure exist, and
$\mathcal{M}_{\delta}^{n,1}$ is the well-known Kakeya maximal operator and is related to the Hausdorff dimension of  Kakeya sets. The main conjecture is that for any $\epsilon>0$,
\begin{align}\label{Kakeya maximal conjecture}
    \|\mathcal{M}_{\delta}^{n,1}(f)\|_{L^p(G(n,1))}\lesssim_{\epsilon} \delta^{1-\frac{n}{p}-\epsilon}\|f\|_{L^p(\R^n)},\;1<p\leq n.
\end{align}
C\'ordoba \cite{Cor77} obtained a sharp $L^2$-estimate in the plane
\begin{align*}
    \|\mathcal{M}_{\delta}^{2,1}(f)\|_{L^2(S^1)}\lesssim \log(1/\delta)^{1/2}\|f\|_{L^2(\R^2)}. 
\end{align*}
We refer the reader to \cite{JKR22} for an introduction to results on the Kakeya maximal function $\M^{n,1}_{\delta}$ when $n\geq 3$.

When $k\geq 2$, the main conjectures, posed by Oberlin \cite{Obe05}, are  
\begin{align}\label{maximal-conjecture}
    \|\mathcal{M}^{n,k}(f)\|_{L^q(G(n,k))}\lesssim\|f\|_{L^p(\R^n)},\;p>n/k,\;1\leq q\leq kp.
\end{align}
and 
\begin{align}
    \|\mathcal{M}_{\delta}^{n,k}(f)\|_{L^q(G(n,k))}\lesssim\delta^{k-n/p}\|f\|_{L^p(\R^n)},\;p<n/k,\;1\leq q\leq (n-k)p'.
\end{align}

For $\M^{n,k}$, Falconer \cite{Fal80} showed that 
\begin{align*}
    \|\mathcal{M}^{n,k}(f)\|_{L^1(G(n,k))}\lesssim\|f\|_{L^p(\R^n)},\;p>n/k,\;k>n/2.
\end{align*}
Later, Bourgain \cite{Bou91} showed that $\M^{4,2}$ is $L^p$-bounded for $p>2$ and  $\M^{7,3}$ is $L^p$-bounded for 
$p>3$. Oberlin \cite{Obe07} showed that if $2^{k-1}+k>n$ and $k\geq 3$,
\begin{align*}
    \|\mathcal{M}^{n,k}(f)\|_{L^{\frac{n-1}{2}}(G(n,k))}\lesssim\|f\|_{L^{\frac{n-1}{2}}(\R^n)}
\end{align*}
and if $(1+\sqrt{2})^{k-1}+k>n$, there exists a large $p=p(n,k)$ such that 
\begin{align*}
    \|\mathcal{M}^{n,k}(f)\|_{L^{p}(G(n,k))}\lesssim\|f\|_{L^{p}(\R^n)}.
\end{align*}

For $\M^{n,k}_{\delta}$, Oberlin \cite{Obe07} showed that for any $\epsilon>0$, there exists a large $p=p(n,k,\epsilon)$ such that 
\begin{align*}
    \|\mathcal{M}_{\delta}^{n,k}(f)\|_{L^{p}(G(n,k))}\lesssim\delta^{-\frac{\alpha+\epsilon}{p}}\|f\|_{L^{p}(\R^n)},
\end{align*}
where $\alpha=\alpha(n,k)=\frac{n-k}{(1+\sqrt{2})^k}$.
Recently, Di Plinio and Parissis \cite{PP22} showed that 
\begin{align*}
     \|\mathcal{M}_{\delta}^{n,k}(f)\|_{L^2(G(n,k))}\lesssim 
      \begin{cases}
        \|f\|_{L^2(\R^n)},\;2k>n;\\
        \log(1/\delta)\|f\|_{L^2(\R^n)},\;2k=n;\\
        \delta^{k-n/2}\|f\|_{L^2(\R^n)},\;2k<n.
    \end{cases} 
   \end{align*}

In the $p$-adic field $\Q^n_p$, Dhar \cite{Dha23} showed that 
\begin{align*}
    \|\mathcal{M}^{n,2}(f)\|_{L^{n-1}(G(\Q^n_p,2))}\lesssim \|f\|_{L^{n-1}(\Q_p^n)}.
\end{align*}

\subsection{$2k>n$}
In the range $2k>n$, our main theorem is the following estimate over fractal measures. 
\begin{thm}\label{maximal-estimate-delta-free}
    Suppose $2k>n$. Let $f\geq 0$ be a measurable function with $\supp f\subset B(0,1)$ and let $\mu$ be a $t$-dimensional Frostman measure on $G(n,k)$ with $t>k(n-k-1)+1$. For any $\epsilon>0$, we have
     \begin{align}
     \|\mathcal{M}^{n,k}(f)\|_{L^2(\mu)}
     \lesssim_{n,k,\mu,t,\epsilon}
    \begin{cases}
        \|f\|_{H^{(k(n-k-1)+1-t)/2+\epsilon}(\R^n)},\;t\leq(k-1)(n-k)+1;\\
        \|f\|_{H^{n/2-k+\epsilon}(\R^{n})},\;t\geq(k-1)(n-k)+1.
    \end{cases}.
   \end{align}
\end{thm}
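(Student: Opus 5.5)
We plan to argue on the Fourier side, decompose dyadically in frequency, and use the geometry of $\mu$ on $G(n,k)$ only through bounds on $\mu$ of "incidence sets" of subspaces whose orthogonal complements nearly contain given frequencies.

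\textbf{Fourier representation and dyadic reduction.} For $V\in G(n,k)$ and $y\in V^\perp$ put $g_V(y)=\int_{y+V}f\,d\H^k$. By the Fourier slice theorem, $\widehat{g_V}=\hat f|_{V^\perp}$, so $\mathcal{M}^{n,k}f(V)=\sup_{y}|g_V(y)|$. Since $\supp f\subset B(0,1)$, only $|y|\le 1$ matter, and $\hat f$ is locally constant at unit scale. Split $f=\sum_{R\ \mathrm{dyadic}}f_R$ with $\widehat{f_R}$ supported in $\{|\xi|\sim R\}$, so that
\[
\mathcal{M}^{n,k}f\le\sum_R\sup_{|y|\le 1}|g_{V,R}(y)|.
\]
Because of the $\epsilon$-room in the Sobolev index, it suffices to prove, for every $R$,
\[
\bigl\|\sup_{|y|\le1}|g_{V,R}(y)|\bigr\|_{L^2(\mu)}\lesssim R^{s}\|\widehat{f_R}\|_{L^2}
\]
with $s=\max\{(k(n-k-1)+1-t)/2,\; n/2-k\}$, and then sum over $R$ with an $R^{-\epsilon}$ loss.

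\textbf{Why the naive bound is not enough.} The basic geometric input is a Frostman count. For $|\xi|\sim R$, the set $\{V: \operatorname{dist}(\xi,V^\perp)\le1\}$ is the $1/R$-neighbourhood of a copy of $G(n-1,k)$, which has dimension $k(n-k-1)$. Covering it by $R^{k(n-k-1)}$ balls of radius $1/R$ gives
\[
\mu\{V:\ \operatorname{dist}(\xi,V^\perp)\le1\}\lesssim R^{k(n-k-1)-t}.
\]
If one bounds the sup by $\|\hat f\|_{L^1(V^\perp\cap\{|\xi|\sim R\})}$ and applies Cauchy--Schwarz, one only gets the exponent $2s=(n-k)+k(n-k-1)-t$. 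This is worse than the claim by $(n-k-1)/2$. So the $L^1$/Sobolev-embedding step has to be replaced by an argument that keeps the oscillation of $\hat f$ along $V^\perp$.

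\textbf{$TT^*$ step.} To do this, linearize the supremum with a measurable choice $y(V)$ and consider
\[
T h(V)=\int_{V^\perp}h(\xi)e^{2\pi i y(V)\cdot\xi}\,d\H^{n-k}(\xi),
\]
acting on $h$ supported in $\{|\xi|\sim R\}$. The estimate we need is $\|T\|_{L^2\to L^2(\mu)}\lesssim R^{s}$. We plan to bound $T^*T$ through its kernel
\[
K(\xi,\eta)=\int \mathbf 1_{\xi,\eta\in N_1(V^\perp)}\,e^{2\pi i y(V)\cdot(\eta-\xi)}\,d\mu(V),
\]
after smoothing at unit scale, and then apply Schur's test. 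Ignoring the phase, the key bilinear incidence bound should be
\[
\mu\{V:\ \xi,\eta\in N_1(V^\perp)\}\lesssim R^{k(n-k-2)-t}\Bigl(\tfrac{R}{|\xi\wedge\eta|/R}\Bigr)^{k}\wedge R^{k(n-k-1)-t}.
\]
Here we require $V^\perp$ to contain a $2$-plane determined up to an error governed by the angle between $\xi$ and $\eta$; the set of such $V$ is a copy of $G(n-2,k)$, thickened. In the Schur integral over $\eta$ in the annulus, the angular integration then gains $R^{-(n-k-1)}$ over the trivial bound, which yields $2s=k(n-k-1)+1-t$. The extra $+1$ comes from the radial direction, where no angular separation is available. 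For $t\ge(k-1)(n-k)+1$ the angular sum is dominated by the near-diagonal (Haar-like) part, and the bound saturates at the Di Plinio--Parissis $L^2$ exponent $2s=n-2k$. This is where $2k>n$ enters: it makes that part summable.

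\textbf{Main obstacle.} The hardest step is making the bilinear incidence estimate sharp uniformly over the angle $|\xi\wedge\eta|$, and integrating it in Schur's test without losing the $(n-k-1)/2$ gained from transversality. The natural first attempt is to parametrize $V^\perp$ near a reference plane by $\operatorname{Hom}(\R^{n-k},\R^k)$. In those coordinates the condition $\xi,\eta\in N_1(V^\perp)$ becomes two linear conditions, and the Frostman counting can be done in boxes of dimensions $R^{-1}\times(\text{angle})^{-1}R^{-1}$. The oscillating phase can be discarded, since the estimate above only uses $|K|$.
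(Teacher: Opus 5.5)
There is a genuine gap. Your reduction to a single dyadic frequency scale $R$ is fine, but the step that is supposed to produce the gain cannot work.

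Once the phase is discarded, Schur's test with constant weights for $T^*T$ on the frequency side can never beat your naive bound, whatever incidence estimate you use. The reason is that the Schur integral is computed exactly by Tonelli:
\[
\int_{|\eta|\sim R}\mu\{V:\ \xi,\eta\in N_1(V^\perp)\}\,d\eta=\int_{\{V:\ \xi\in N_1(V^\perp)\}}\bigl|N_1(V^\perp)\cap\{|\eta|\sim R\}\bigr|\,d\mu(V)\approx R^{n-k}\,\mu\{V:\ \xi\in N_1(V^\perp)\}.
\]
Hence the Schur constant is $R^{n-k}\sup_\xi\mu\{V:\xi\in N_1(V^\perp)\}$, which gives exactly the naive exponent $(n-k)+k(n-k-1)-t$. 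This supremum really is that large for admissible $\mu$. Near $\{V:V\perp e_n\}$, take $\mu$ to be Lebesgue measure along this copy of $G(n-1,k)$ times the density $|u|^{t-k(n-k-1)-k}\,du$ in the $k$ normal coordinates. This $\mu$ is $t$-Frostman and gives mass $\sim R^{k(n-k-1)-t}$ to the set where $\operatorname{dist}(Re_n,V^\perp)\le 1$.

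Your own bilinear bound says the same thing. On the region $\angle(\xi,\eta)\sim1$, which has volume $\sim R^n$, it gives $R^{n}\cdot R^{k(n-k-2)-t}=R^{(n-k)+k(n-k-1)-t}$. For $n-k\ge2$ this transversal part dominates the dyadic sum over angles; only the near-diagonal region $\angle(\xi,\eta)\lesssim 1/R$ yields $R^{k(n-k-1)+1-t}$. So the claimed angular gain of $R^{-(n-k-1)}$ is an error, and the saturation at $n-2k$ is asserted rather than derived.

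What is missing is genuine $L^2$ (spectral) information about the translation-free operator, not a bound on the absolute value of a kernel. The paper proceeds as follows.

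\begin{enumerate}
\item It removes the translations by a $TT^*$ argument on the Grassmannian side: smooth tubes with $\widehat{\Phi_{V,\delta}}\ge 0$ give $|K_Q(V,W)|\le K_D(V,W)$.
\item It writes $\int_V f\,d\mathcal{H}^k=S(F)(V)$, where $F(\omega)=\int_0^1 f(r\omega)r^{k-1}\,dr$ and $S$ averages over the great subsphere $V\cap S^{n-1}$.
\item Since $S$ intertwines $\Delta_{S^{n-1}}$ with $\Delta_{G(n,k)}$, and $S^*S$ has kernel $(1-\langle\omega,\eta\rangle^2)^{(k-n)/2}$ (a potential of order $k-1$), one gets $S:L^2(S^{n-1})\to H^{(k-1)/2}(G(n,k))$.
\item The Frostman hypothesis is used only in the trace inequality $\|g\|_{L^2(\mu)}\lesssim\|g\|_{H^{(k(n-k)-t)/2+\epsilon}(G(n,k))}$. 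This is a Schur bound with the Riesz kernel $|z-w|^{-t+2\epsilon}$ on the Grassmannian, not on frequencies.
\end{enumerate}

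Combining these gives the index $(k(n-k)-t)/2-(k-1)/2=(k(n-k-1)+1-t)/2$. The branch $n/2-k$ and the hypothesis $2k>n$ enter through $\|F\|_{H^\alpha(S^{n-1})}\lesssim\|f\|_{H^\beta(\R^n)}$ for $\beta\ge\alpha>n/2-k$. To stay on the Fourier side you would at least need a non-constant Schur weight encoding this structure, and your proposal does not supply one.
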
   
In particular, we have the following result.
\begin{thm}\label{L^2}
    Suppose $2k>n$. Let $f$ be a measurable function with $\supp f\subset B(0,1)$, and let $\mu$ be a $t$-dimensional Frostman measure with $t>k(n-k-1)+1$. We have 
    \begin{align}
        \|\mathcal{M}^{n,k}(f)\|_{L^2(\mu)}
     \lesssim_{n,k,\mu,t}\|f\|_{L^2(\R^n)}.
    \end{align}
    As a corollary, if $2k>n$, $(n,k,t)$-sets have positive Lebesgue measure if $t>k(n-k-1)+1$.

\end{thm}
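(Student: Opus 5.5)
Theorem \ref{L^2} will be deduced from Theorem \ref{maximal-estimate-delta-free} by choosing $\epsilon$ so that the Sobolev exponent becomes nonpositive. That choice fails when the exponent is positive, so in those cases we will trade the gap in $t$ for a shrinking of the Frostman exponent.

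\textbf{Choice of $\epsilon$.} Let $s(t)$ denote the Sobolev exponent in Theorem \ref{maximal-estimate-delta-free} without the $\epsilon$:
\begin{align*}
s(t)=\begin{cases}\frac{k(n-k-1)+1-t}{2}, & t\le (k-1)(n-k)+1,\\ \frac{n}{2}-k, & t\ge (k-1)(n-k)+1.\end{cases}
\end{align*}
Since $2k>n$, the second branch gives $s(t)=n/2-k<0$. In the first branch, $t>k(n-k-1)+1$ gives $s(t)<0$. So in every case we may take $0<\epsilon<-s(t)$. Then $s(t)+\epsilon<0$, and $\|f\|_{H^{s(t)+\epsilon}}\le\|f\|_{H^0}=\|f\|_{L^2}$, because $(1+|\xi|^2)^{s(t)+\epsilon}\le 1$. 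For nonnegative $f$ this gives the estimate directly.

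For general measurable $f$, we note that $\M^{n,k}(f)=\M^{n,k}(|f|)$ and $\||f|\|_{L^2}=\|f\|_{L^2}$, which reduces to the case $f\ge0$. One should also check that $\M^{n,k}(f)$ is $\mu$-measurable, or else interpret the $L^2(\mu)$ norm as an upper integral. I expect the paper handles this by the same convention used for Theorem \ref{maximal-estimate-delta-free}.

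\textbf{Corollary: positive measure.} Let $B$ be an $(n,k,t)$-set with $\dim\V\ge t>k(n-k-1)+1$. Pick $t'$ with $k(n-k-1)+1<t'<t$. By Frostman's lemma on the compact metric space $G(n,k)$ (it is locally bi-Lipschitz to Euclidean space, so the lemma applies), there is a $t'$-Frostman probability measure $\mu$ supported on a compact subset of $\V$. To get this we may first pass to a compact subset of $\V$, or use $\H^{t'}_\infty(\V)>0$ together with Frostman's lemma for analytic sets. If needed, we reduce to the Borel case by replacing $B$ with a $G_\delta$ superset of equal measure; such a superset still contains the same disks. By translation and scaling we may assume the disks lie in a bounded set. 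We then decompose: cover by finitely many unit balls, and since some ball captures a positive-$\mu$ portion of directions, restrict $\mu$ accordingly. After rescaling, we may assume $B\subset B(0,1)$, with disks of radius $r$ instead of $1$; this only changes constants.

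Suppose $|B|=0$ and take $f=\chi_B$. Then $\|f\|_{L^2}=0$, while $\M^{n,k}(f)(V)\ge \H^k(\text{disk})\gtrsim 1$ for every $V\in\supp\mu\subset\V$. Hence $\|\M^{n,k}f\|_{L^2(\mu)}\gtrsim \mu(G(n,k))^{1/2}>0$, contradicting the estimate. So $|B|>0$.

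\textbf{Main obstacle.} The only delicate point is the measurability and Frostman-measure bookkeeping, not the inequality itself. The step I would double-check is that Theorem \ref{maximal-estimate-delta-free} allows an arbitrary Frostman measure, so that passing from $t$ to $t'$ is harmless.
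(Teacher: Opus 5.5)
Your derivation of the inequality is correct and matches the paper: Theorem \ref{L^2} is an immediate special case of Theorem \ref{maximal-estimate-delta-free} once $\epsilon$ is small enough that the Sobolev exponent is negative. Passing to some $t'\in(k(n-k-1)+1,t)$ for the corollary is also right.

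\textbf{The gap.} It is in the corollary, at the step where you put a Frostman measure on $\V$. In the definition of an $(n,k,t)$-set, $\V$ is an arbitrary subset of $G(n,k)$, not assumed Borel or analytic, so neither justification you offer works. A non-analytic set can have full Hausdorff dimension $k(n-k)$ while all its compact subsets are countable; a Bernstein-type set, which has full outer Haar measure, is an example. So there need not be any compact $K\subset\V$ with $\H^{t'}(K)>0$, and Frostman's lemma for analytic sets does not apply to $\V$.

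Your localization step has the same defect. The disks need not lie in any bounded region: no translation or rescaling of $B$ changes that, and finitely many balls do not suffice. A countable exhaustion is needed. But the pieces ``$V\in\V$ whose disk lies in $B(0,m)$'' are not known to be $\mu$-measurable, so pigeonholing in $\mu$ and ``restricting $\mu$ accordingly'' are not justified.

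\textbf{The missing idea.} This is how the paper argues: pigeonhole with the Hausdorff content \emph{before} choosing a measure, and apply Frostman's lemma to measurable supersets of pieces of $\V$ built from $B$, rather than to $\V$ itself.
\begin{enumerate}
\item Replace $B$ by a $G_\delta$ hull of the same measure. Set $B_m=B\cap B(0,m)$ and $G_m=\{V:\M^{n,k}(1_{B_m})(V)>c\}$, where $c$ is smaller than the $\H^k$-measure of the unit $k$-disk.
\item Each $G_m$ is Borel, and in any case analytic, being the projection of a Borel subset of $G(n,k)\times\R^n$. Moreover $\V\subset\bigcup_m G_m$.
\item $\H^{t'}_\infty$ is countably subadditive on arbitrary sets and $\H^{t'}_\infty(\V)>0$, so some $m$ has $\H^{t'}_\infty(G_m)>0$.
\item Frostman's lemma then gives a $t'$-dimensional Frostman measure $\mu$ supported on $G_m$.
\item The $L^2$ estimate, rescaled from $B(0,1)$ to $B(0,m)$ and applied to $1_{B_m}$, yields $0<c^2\mu(G_m)\lesssim_m|B_m|\le|B|$.
\end{enumerate}
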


The range $t>k(n-k-1)+1$ in Theorem \ref{L^2} is sharp because $(n,k,k(n-k-1)+1)$-sets of measure zero exist. Indeed, $(n,k,k(n-k-1)+1)$-sets of zero Lebesgue measure exist for all $k\geq 2$. There are two kinds of such examples. The first one considers a zero-measure perturbation of all $k$-dimensional subspaces contained in an $(n-1)$-subspace. The second one is the Cartesian product $K\times \R^{n-2}$ for a zero-measure planar Kakeya set $K$.  We will give the details in Section \ref{examples}.

When $\mu=\gamma_{n,k}$, using the embedding estimate 
\begin{align*}
    L^{n/k+O(\epsilon)}(\R^n)\hookrightarrow H^{n/2-k+\epsilon}(\R^n),
\end{align*} we have the following $L^p-L^2$ estimate for all $p>n/k$.
\begin{thm}\label{L^2-to-L^p}
    Suppose $2k>n$. Let $f$ be a measurable function with $\supp f\subset B(0,1)$. We have 
    \begin{align}
        \|\mathcal{M}^{n,k}(f)\|_{L^2(G(n,k))}\lesssim_{n,k,p} \|f\|_{L^p(\R^n)},\;\forall p>n/k.
    \end{align}
\end{thm}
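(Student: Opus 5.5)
The plan is to deduce Theorem \ref{L^2-to-L^p} from Theorem \ref{maximal-estimate-delta-free}, applied with $\mu=\gamma_{n,k}$, the normalized Haar measure on $G(n,k)$. Since $\dim G(n,k)=k(n-k)$ and $\gamma_{n,k}(B(V,r))\approx r^{k(n-k)}$, this is a $t$-dimensional Frostman measure with $t=k(n-k)$. First I check the hypotheses. The condition $t>k(n-k-1)+1$ reads $k(n-k)>k(n-k)-k+1$, i.e.\ $k>1$, and this holds because $2k>n\ge 2$ forces $k\ge 2$ (for $n\ge 3$; the case $k=1,n=1$ is vacuous). Moreover $t=k(n-k)\ge (k-1)(n-k)+1$ is equivalent to $n-k\ge 1$. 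So we are in the second case of Theorem \ref{maximal-estimate-delta-free}, and for every $\epsilon>0$
\begin{align*}
\|\mathcal{M}^{n,k}(f)\|_{L^2(G(n,k))}\lesssim_{n,k,\epsilon}\|f\|_{H^{n/2-k+\epsilon}(\R^n)}.
\end{align*}
We may assume $f\ge 0$, since $\mathcal{M}^{n,k}$ only sees $|f|$.

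The remaining step is the Sobolev embedding $L^p\hookrightarrow H^{s}$ with $s=n/2-k+\epsilon$, for $p$ slightly above $n/k$. Write $s=n/2-k+\epsilon$ with $s\in(-n/2,0)$ once $\epsilon$ is small, since $2k>n$. By duality, $\|f\|_{H^s}\lesssim\|f\|_{L^p}$ is equivalent to $H^{-s}\hookrightarrow L^{p'}$ with $-s=k-n/2-\epsilon\in(0,n/2)$. Hardy--Littlewood--Sobolev gives this for $1/p'=1/2-(-s)/n$, i.e.\ $1/p=1/2+(k-n/2-\epsilon)/n=k/n-\epsilon/n$. So $f\in L^{p_\epsilon}$ with $p_\epsilon=n/(k-\epsilon)$ implies $\|f\|_{H^{n/2-k+\epsilon}}\lesssim\|f\|_{L^{p_\epsilon}}$; concretely, $\|f\|_{H^s}=\|(1+|\xi|^2)^{s/2}\hat f\|_{L^2}$ is bounded by a Bessel potential $G_{-s}*f$ in $L^2$, and HLS applies.

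Given $p>n/k$, I choose $\epsilon>0$ with $p_\epsilon=n/(k-\epsilon)\le p$. Since $\supp f\subset B(0,1)$, H\"older gives $\|f\|_{L^{p_\epsilon}}\lesssim\|f\|_{L^p}$, and chaining the three inequalities finishes the proof.

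There is no real obstacle. The only care needed is confirming that Haar measure satisfies the Frostman condition with exponent exactly $k(n-k)$, which is standard from the smooth manifold structure of $G(n,k)$, and getting the sign conventions of the negative-order Sobolev embedding right.
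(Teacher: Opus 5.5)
Your proposal is correct and follows the paper's own argument. You apply Theorem \ref{maximal-estimate-delta-free} with $\mu=\gamma_{n,k}$ (so $t=k(n-k)$ falls in the second case), then use the negative-order Sobolev embedding $L^{n/(k-\epsilon)}(\R^n)\hookrightarrow H^{n/2-k+\epsilon}(\R^n)$ together with H\"older's inequality on $B(0,1)$ to reach every $p>n/k$.
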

As $G(n,k)$ is compact, our estimate is stronger than Falconer's $L^p-L^1$ estimate in \cite{Fal80}. Also, since $2>n/k$, by H\"older's inequality, we obtain an $L^p-L^p$-estimate for all $p>n/k$. This is sharp by standard examples.
\subsection{$2k\leq n$}
Recall that in the range $2k>n$, we can reduce the $L^2$ estimate of the maximal operator $\mathcal{M}^{n,k}_{\delta}$ to an estimate of the operator 
\begin{align*}
    C_{\delta}(f)(V)=\delta^{k-n}\int_{\mathcal{N}_{\delta}(V)}f.
\end{align*}
In other words, we can ignore the effect of translations. But when $2k\leq n$, this method fails, and we cannot reduce $\mathcal{M}^{n,k}_{\delta}$ to $C_{\delta}$. We will show this by constructing explicit examples, and we leave the details to Section \ref{Obstruction when 2k_leq_n}. Thus, the case $2k\leq n$ is quite different from the case $2k>n$, and we believe this explains why there are many proofs of positive measure for $(n,k)$-sets in the range $2k>n$, while there are few results in the range $2k\leq n$. 

Nevertheless, in the range $2k\leq n$, we can still obtain the following result for $\mathcal{M}^{n,k}$ with the $\epsilon$-loss Kakeya maximal estimate. The idea follows the lifting argument of Bourgain \cite{Bou91} and Oberlin \cite{Obe05}.
\begin{thm}\label{Kakeya-imply-(n,k,t)}
    Suppose $m,n,k$ are positive integers with $n-k=m$ and $k\geq 2$. Let $t>k(n-k-1)+1$. Suppose that for some $0<\epsilon<t-k(n-k-1)-1$ and some $2\leq p<\infty$, we have 
    \begin{align}
        \|\mathcal{M}^{m+1,1}_{\delta}(g)\|_{L^p(G(m+1,1))}\lesssim\delta^{-\epsilon/p}\|g\|_{L^p(\R^{m+1})}.
    \end{align}
    Then  we have 
    \begin{align}
        \|\mathcal{M}^{n,k}(f)\|_{L^p(\mu)}\lesssim\|f\|_{L^p(\R^n)},
    \end{align}
    where $\mu$ is a $t$-dimensional Frostman measure on $G(n,k)$.
\end{thm}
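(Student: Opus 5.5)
We follow the lifting argument of Bourgain and Oberlin: slice the $k$-plane problem into $(m+1)$-dimensional Kakeya problems, one for each $(k-1)$-dimensional "axis" inside a $k$-plane, and pay for the averaging over axes with the Frostman exponent.

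\textbf{Step 1: reduction to $\delta$-discretised estimates.} By monotone convergence and the standard comparison $\mathcal{M}^{n,k}(f)(V)\lesssim \liminf_{\delta\to0}\mathcal{M}^{n,k}_\delta(f)(V)$ for nice $f$, it suffices to prove
\[
\|\mathcal{M}^{n,k}_\delta(f)\|_{L^p(\mu)}\lesssim\|f\|_{L^p(\R^n)}
\]
uniformly in $\delta$, for $f\ge0$ supported in $B(0,1)$. We may also localise $\mu$ to a small neighbourhood of a fixed $V_0\in G(n,k)$. In suitable coordinates $\R^n=\R^{k-1}\times\R^{m+1}$, every $V$ near $V_0$ can then be written as
\[
V=\{(y,\;A y+s e): y\in\R^{k-1},\ s\in\R\},
\]
where $A$ is an $(m+1)\times(k-1)$ matrix and $e\in S^m$ is a direction. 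This parametrises a neighbourhood of $V_0$ by $(A,e)$, of dimension $(k-1)(m+1)+m=k(n-k)$.

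\textbf{Step 2: slicing.} A $\delta$-neighbourhood of a unit disk in $x+V$ is contained in the union over $y\in B^{k-1}(0,1)$ of $\delta$-tubes in the fibres $\{y\}\times\R^{m+1}$. Each such tube has direction $e$ and passes through $x'+Ay$. Hence
\[
\mathcal{M}^{n,k}_\delta f(V)\lesssim \int_{B^{k-1}}\mathcal{M}^{m+1,1}_\delta\bigl(f_y\bigr)(e)\,dy,
\qquad f_y=f(y,\cdot).
\]
There is a subtlety here: the tube positions depend on $y$ through $Ay$, but the maximal operator takes a supremum over positions separately in each slice. So this inequality holds, with only a loss in the maximal function, and after this step the dependence on $A$ has disappeared. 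By Minkowski's inequality,
\[
\|\mathcal{M}^{n,k}_\delta f\|_{L^p(\mu)}\le\int\|\mathcal{M}^{m+1,1}_\delta f_y\|_{L^p(\mu)}\,dy .
\]
Let $\nu$ be the pushforward of $\mu$ under $V\mapsto e$, a measure on $S^m$. Then
\[
\|\mathcal{M}^{m+1,1}_\delta f_y\|_{L^p(\mu)}=\|\mathcal{M}^{m+1,1}_\delta f_y\|_{L^p(\nu)}.
\]

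\textbf{Step 3 (main obstacle): using the Frostman condition.} The fibres of the map $V\mapsto e$ have dimension $(k-1)(m+1)=k(n-k-1)+k-1$, not $k(n-k-1)$. Therefore the naive bound $\nu(B(e,r))\lesssim r^{t-(k-1)(m+1)}$ only gives dimension $t-k(n-k-1)-k+1$ for $\nu$, which is too weak. The fix I would try first is not to discard $A$. Instead, apply the slicing more cleverly: for each fixed $y$, the relevant tube direction is a direction of $V$, and rotating the splitting lets $e$ range over all of the $k$-dimensional directions in $V$. Concretely, I would average the decomposition over choices of the axis ($(k-1)$-subspace), so that the map becomes $V\mapsto$ (a unit vector of $V\cap W^\perp$ for a generic $(n-k+1)$-subspace $W$). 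For generic $W$ its fibres have dimension $k(n-k-1)$. By a Fubini/Marstrand-type slicing argument for Frostman measures, for most $W$ the projected measure on $S^{m}\subset W$ then has Frostman exponent $s=t-k(n-k-1)>1+\epsilon$.

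\textbf{Step 4: from Haar to the Frostman measure $\nu$.} Discretise $\nu$ at scale $\delta$ and use the assumed Haar bound on $G(m+1,1)$. Since $\mathcal{M}^{m+1,1}_\delta g$ is essentially constant on $\delta$-caps, we get
\[
\|\mathcal{M}_\delta g\|_{L^p(\nu)}^p\lesssim\sum_{\text{caps }\tau}\nu(\tau)\,\delta^{-m}\int_\tau|\mathcal{M}_\delta g|^p .
\]
This does not close directly because $\nu(\tau)\lesssim\delta^{s}$ with $s<m$. So I would instead use Kakeya estimates at all scales $\delta\le r\le1$, combining the Frostman bound with dyadic pigeonholing in the $\nu$-density. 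The assumed $\delta^{-\epsilon/p}$ loss is affordable exactly because $s-1>\epsilon$: the gain $\delta^{(s-1)/p}$ coming from how a $\delta$-tube family with $s$-dimensional direction set interacts with the one-dimensional tube structure beats the loss. Summing over dyadic $\delta$ and integrating in $y$ (Hölder in $y$, using $\int\|f_y\|_p^p\,dy=\|f\|_p^p$) then yields the claim. I expect Step 3 and the scale-combination in Step 4 to carry the real difficulty.
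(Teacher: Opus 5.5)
\textbf{There is a genuine gap, and it is already in Step 2.} When you take the supremum over positions separately in each slice $\{y\}\times\R^{m+1}$, you decouple the slices completely. The resulting quantity cannot be bounded by $\|f\|_{L^p}$ for any nonzero measure $\nu$ on $S^m$.

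To see this, let $E\subset\R^{m+1}$ be a compact Besicovitch set of measure zero (e.g.\ $K\times[-1,1]^{m-1}$ with $K$ a planar Kakeya null set), and take $f=1_{B^{k-1}}\otimes 1_{\mathcal{N}_\delta(E)}$, rescaled to lie in $B(0,1)$. Then $\mathcal{M}^{m+1,1}_\delta(f_y)(e)\gtrsim 1$ for every $e$ and every $|y|\le 1$. So the right-hand side of your Minkowski bound is $\gtrsim \nu(S^m)^{1/p}$, while $\|f\|_{L^p}\sim|\mathcal{N}_\delta(E)|^{1/p}\to 0$.

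Consequently, no Frostman property of $\nu$ (Step 3) and no multiscale argument (Step 4) can close the estimate. The hoped-for gain $\delta^{(s-1)/p}$ does not exist for the Kakeya maximal operator. In the paper's proof that operator contributes no gain at all, only the $\delta^{-\epsilon}$ loss. What makes the theorem true is that a single translate must serve all slices simultaneously, i.e.\ the dependence on $A$, and this is exactly what Step 2 throws away.

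Two secondary slips: your parametrisation has dimension $(k-1)(m+1)+m=k(n-k)+k-1$, since $A$ is only determined modulo $e$. Also, the natural map in Step 3 is $V\mapsto V\cap W$ (not $V\cap W^\perp$), whose fibres have dimension $(k-1)(n-k)$, not $k(n-k-1)$.

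\textbf{What the paper does instead.} It keeps the supremum joint and gains from averaging along the plane, which requires frequency localisation.
\begin{enumerate}
\item Decompose $f=\sum_j f_j$ into Littlewood--Paley pieces, and use the signed operator $\tilde{\mathcal{M}}$ so that $f_j$ can be treated at scale $\delta=2^{-j}$.
\item Writing $V=L\oplus U$, use the exact identity $\tilde{\mathcal{M}}^{n,k}_\delta(g)(V)=\tilde{\mathcal{M}}^{n-1,k-1}_\delta(\mathcal{X}(g)(L,\cdot))(U)$. Here one integrates along a line of $V$ while the supremum over translations in $L^\perp$ is retained.
\item For $\widehat{g}$ supported where $|\xi|\sim 2^j$, the X-ray transform preserves the annular Fourier support. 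It gains $2^{-j}$ in $\|\cdot\|_{L^p}^p$, via Plancherel with weight $|\xi|^{-1}$ interpolated with $L^\infty$.
\item Inducting $k-1$ times down to the hypothesis in $\R^{m+1}$ gives $\|\tilde{\mathcal{M}}^{n,k}_{2^{-j}}(f_j)\|^p_{L^p(G(n,k))}\lesssim 2^{-j(k-1-\epsilon)}\|f\|_{L^p}^p$.
\item The Frostman measure is handled crudely. By local constancy at scale $2^{-j}$, passing from Haar measure to $\mu$ costs $2^{j(k(n-k)-t)}$.
\item The series $\sum_j 2^{j(k(n-k-1)+1-t+\epsilon)/p}$ converges exactly when $t>k(n-k-1)+1+\epsilon$.
\end{enumerate}
So the threshold comes from the Frostman codimension $k(n-k)-t$ being beaten by the X-ray gain $k-1$ minus the Kakeya loss $\epsilon$. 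It does not come from the dimension of a direction measure on $S^m$.
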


Recently, Wang and Zahl \cite{WZ25} gave a quantitative Kakeya estimate in $\R^3$ and solved the Kakeya conjecture in $\R^3$. 
The quantitative Kakeya estimate in $\R^{m+1}$ says that for any $\epsilon>0$, there exists $K=K(\epsilon)>1$ such that the following holds for all small enough $\delta>0$. Let $\mathbb{T}$ be a set of $\delta$-tubes contained in the unit ball in $\R^{m+1}$ that point in $\delta$-separated directions. For each $T\in\mathbb{T}$, let $Y(T)\subset T$ be a measurable set with $|Y(T)|\geq \lambda|T|$. Then 
\begin{align*}
    \left|\bigcup _{T\in\mathbb{T}}Y(T)\right|\geq \delta^{\epsilon}\lambda^K\sum_{T\in\mathbb{T}}|T|.
\end{align*}

A simple corollary of this quantitative Kakeya estimate is the following $\epsilon$-loss Kakeya maximal estimate: for $\epsilon>0$, there exists $K=K(\epsilon)$ such that for $p>K(\epsilon)$,
\begin{align*}
    \|\mathcal{M}^{m+1,1}_{\delta}(g)\|_{L^p(G(m+1,1))}\lesssim\delta^{-\epsilon/p}\|g\|_{L^p(\R^{m+1})}.
\end{align*}

Combining this with Theorem \ref{Kakeya-imply-(n,k,t)}, we have the following.
    \begin{thm}\label{quantitative-Kakeya-imply-sharp-(n,k,t)}
        Assume that the quantitative Kakeya estimate in $\R^{m+1}$ holds.
        Then for $k\geq 2$ and $(n,k)=(k+m,k)$, $(n,k,t)$-sets have positive measure if $t>k(n-k-1)+1$.
    \end{thm}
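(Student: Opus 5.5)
The plan is to combine Theorem \ref{Kakeya-imply-(n,k,t)} with the $\epsilon$-loss Kakeya maximal estimate that follows from the quantitative Kakeya estimate. We then pass from the resulting maximal bound on $L^p(\mu)$ to positive Lebesgue measure by the standard Frostman argument.

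First, fix $t>k(n-k-1)+1$ and choose $\epsilon$ with $0<\epsilon<t-k(n-k-1)-1$. By the quantitative Kakeya estimate in $\R^{m+1}$ and the corollary recorded above, there is $K=K(\epsilon)$ such that for every $p>\max(K,2)$
\begin{align*}
\|\mathcal{M}^{m+1,1}_{\delta}(g)\|_{L^p(G(m+1,1))}\lesssim\delta^{-\epsilon/p}\|g\|_{L^p(\R^{m+1})}.
\end{align*}
If one wants to verify this corollary, it comes from dyadic pigeonholing. Decompose $g$ into level sets and the directions into the level sets of $\mathcal{M}_\delta g$. Apply the quantitative estimate to $\delta$-separated tubes with $Y(T)=T\cap\{g\sim\lambda\}$. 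Then use $p>K$ to absorb the factor $\lambda^{K}$, at the cost of $\log(1/\delta)$ factors absorbed into $\delta^{-\epsilon/p}$. Fix one such $p\in[2,\infty)$. Theorem \ref{Kakeya-imply-(n,k,t)} then gives $\|\mathcal{M}^{n,k}(f)\|_{L^p(\mu)}\lesssim\|f\|_{L^p(\R^n)}$ for every $t$-dimensional Frostman measure $\mu$ on $G(n,k)$.

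Second, let $B$ be an $(n,k,t)$-set with direction set $\V$, $\dim\V\ge t$. We may assume $B$ is bounded by intersecting with a large ball and keeping a subset of $\V$ of the same dimension; for instance, use countable stability of Hausdorff dimension after translating the disks into $B(0,R)$ for countably many $R$. Pick $t'$ with $k(n-k-1)+1<t'<t$. By Frostman's lemma (we may take $\V$ compact or Borel, or use the Borel hull), there is a nonzero $t'$-Frostman measure $\mu$ supported on $\V$. Suppose $|B|=0$, and take an open $U\supset B$ with $|U|<\eta$. Set $f=\chi_U$, localized to a fixed ball. For each $V\in\V$, the set $x+V$ contains a unit disk inside $U$, so $\mathcal{M}^{n,k}f(V)\ge c_k>0$. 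Hence
\begin{align*}
c_k\,\mu(\V)^{1/p}\le\|\mathcal{M}^{n,k}f\|_{L^p(\mu)}\lesssim|U|^{1/p}<\eta^{1/p}.
\end{align*}
Letting $\eta\to0$ gives a contradiction. Measurability of $B$ is not needed, since we work with an open cover; the support condition can be handled by rescaling.

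The main obstacle is the first step. This is the verification that the quantitative Kakeya estimate yields the $\epsilon$-loss maximal estimate with $\epsilon$ arbitrarily small for some finite $p$, and that this $\epsilon$ fits under the gap $t-k(n-k-1)-1$. The rest is routine.
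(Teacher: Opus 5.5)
Your proposal follows the paper's argument: the $\epsilon$-loss Kakeya maximal estimate (for some $p\ge 2$) obtained from the quantitative Kakeya estimate is fed into Theorem \ref{Kakeya-imply-(n,k,t)}, and positive measure then follows from the Frostman argument of Section \ref{proof-of-main-maximal}, except that you use open covers $U\supset B$ where the paper uses a $G_\delta$ hull and pigeonholing in $m$. Two small repairs are needed: because you apply Theorem \ref{Kakeya-imply-(n,k,t)} to a $t'$-Frostman measure, choose $\epsilon<t'-k(n-k-1)-1$ after fixing $t'$; and because $\V$ need not be Borel (an arbitrary Borel hull of $\V$ loses the disk property), apply Frostman's lemma to the $G_\delta$ set $\bigcap_j\{V:\exists x,\ x+D_V\subset U_j\}\supset\V$ instead, where $D_V$ is the closed unit disk of $V$, the $U_j\supset B$ are open sets in a fixed ball with $|U_j|\to0$, each set in the intersection is open, and $\mathcal{M}^{n,k}1_{U_j}\ge c_k$ holds on it for every $j$.
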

Since the quantitative Kakeya estimate in $\R^{3}$ holds, we have the following sharp result when $n=4$, $k=2$.
    \begin{corollary}
        $(4,2,t)$-sets have positive measure if $t>3$.
    \end{corollary}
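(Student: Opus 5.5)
The corollary follows by combining Theorem \ref{quantitative-Kakeya-imply-sharp-(n,k,t)} with the quantitative Kakeya estimate of Wang and Zahl \cite{WZ25}. Take $k=2$ and $m=n-k=2$, so that $m+1=3$ and $(n,k)=(k+m,k)=(4,2)$. The threshold in Theorem \ref{quantitative-Kakeya-imply-sharp-(n,k,t)} becomes
\begin{align*}
k(n-k-1)+1=2\cdot 1+1=3 .
\end{align*}
So once the quantitative Kakeya estimate in $\R^3$ is granted, $(4,2,t)$-sets have positive measure for every $t>3$.

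The one point to check is that the hypothesis of Theorem \ref{quantitative-Kakeya-imply-sharp-(n,k,t)} holds in exactly the form stated before it. Namely, for every $\epsilon>0$ there should be $K(\epsilon)$ such that, for tubes pointing in $\delta$-separated directions with shadings $Y(T)$ satisfying $|Y(T)|\ge\lambda|T|$, one has $|\bigcup Y(T)|\ge \delta^\epsilon\lambda^K\sum|T|$. This is the main theorem of \cite{WZ25}, the sticky-free quantitative Kakeya estimate in $\R^3$, so I would simply cite it.

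If one prefers to route through Theorem \ref{Kakeya-imply-(n,k,t)} directly, fix $t>3$ and choose $0<\epsilon<t-3$. The $\epsilon$-loss maximal bound $\|\M^{3,1}_\delta g\|_{L^p}\lesssim\delta^{-\epsilon/p}\|g\|_{L^p}$ for large $p$ then follows from the quantitative estimate by the standard level-set/duality argument. Theorem \ref{Kakeya-imply-(n,k,t)} then yields $\|\M^{4,2}f\|_{L^p(\mu)}\lesssim\|f\|_{L^p}$ for a $t$-dimensional Frostman measure $\mu$.

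It remains to pass from this maximal bound to positive measure. Given a $(4,2,t)$-set $B$ with direction set $\V$, $\dim\V\ge t$, Frostman's lemma supplies $\mu$ supported on $\V$. If $|B|=0$, take $f=\mathbf 1_{B\cap B(0,R)}$ after localizing. Then $\M^{4,2}f\gtrsim 1$ on $\supp\mu$, while $\|f\|_{L^p}=0$, which is a contradiction. Since all of this is already packaged in Theorem \ref{quantitative-Kakeya-imply-sharp-(n,k,t)}, there is no real obstacle beyond verifying that the cited form of the Wang--Zahl estimate matches the hypothesis.
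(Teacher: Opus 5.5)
Your proposal is correct and follows the paper's proof: the paper obtains the corollary by specializing Theorem \ref{quantitative-Kakeya-imply-sharp-(n,k,t)} to $k=m=2$, where the threshold is $k(n-k-1)+1=3$, and taking the quantitative Kakeya estimate in $\R^3$ from Wang--Zahl \cite{WZ25}. Your optional last paragraph is looser than the paper's Section \ref{proof-of-main-maximal}: an arbitrary $\V$ need not be Borel, and the translates need not be uniformly bounded, which is why the paper first pigeonholes into the sets $G_m$ before applying Frostman's lemma; this does not affect your main argument, because that step is already included in the theorem you cite.
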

    
Theorem \ref{Kakeya-imply-(n,k,t)} is also valid in the range $2k>n$, but weaker than Theorem \ref{maximal-estimate-delta-free}; even if we have the sharp Kakeya maximal estimate, we can only get an $L^{n-k+1}$-estimate for the operator $\mathcal{M}^{n,k}$, while Theorem \ref{maximal-estimate-delta-free} implies an $L^2$ estimate. To obtain $L^p$-estimates for $p>n/k$, we need some methods other than the lifting argument.

\subsection*{Organization}
This paper is organized as follows. We first deal with maximal estimates, then show maximal estimates imply positive Lebesgue measure, and finally construct measure-zero examples showing the sharpness.
In Section \ref{maximal-function-2k>n}, we prove the maximal estimate, Theorem \ref{maximal-estimate-delta-free}, in the range
$2k>n$. The key reduction eliminates the supremum over translations: a $TT^*$
argument shows that the $L^2$ bound for the maximal operator is
controlled by that for a translation-free operator $C_\delta$. We also record an obstruction for this reduction, Proposition \ref{translation-not-remove}, when $2k\leq n$. In Section \ref{maximal-function-2k-leq-n}, we prove Theorem \ref{Kakeya-imply-(n,k,t)} via a lifting argument. In Section \ref{restricted-(n,k)-section}, we show that 
these estimates imply positive Lebesgue measure and
construct measure-zero examples showing the sharpness of the threshold $t>k(n-k-1)+1$.
\subsection*{Acknowledgement}
The author would like to thank his supervisor, Bochen Liu, for many useful suggestions. The author would also like to thank Shengwen Gan for pointing out several errors in the earlier version. The author began working on $(n,k)$-sets while attending \textit{the Summer School in p-adic Harmonic Analysis} at the Chern Institute of Mathematics, Nankai University, and would also like to thank the organizers, Shaoming Guo, Ping Xi, and Yakun Xi. 
\subsection*{Declaration of generative AI and AI-assisted technologies.}
Early in this work, the author used GPT-6 Astra as a brainstorming tool. The model produced the initial statement and a first proof of Theorem \ref{L^2}, together with an initial version of Proposition \ref{translation-not-remove} and its proof. The author then independently produced the stronger quantitative estimate, Theorem \ref{maximal-estimate-delta-free}, and its proof. For Theorem~\ref{quantitative-Kakeya-imply-sharp-(n,k,t)}, the author was familiar with Oberlin's lifting argument; during iterative discussions with the model, the model suggested that this argument extends to the Frostman measure setting and that one can use the quantitative Kakeya estimate on this problem.  The argument that appears in the paper, namely the proof of Theorem~\ref{Kakeya-imply-(n,k,t)} from which Theorem~\ref{quantitative-Kakeya-imply-sharp-(n,k,t)} follows, was developed by the author through these iterative discussions and differs substantially from the model's initial output.

All other results, arguments, and constructions in this paper are the author's own work, and the author wrote the entire manuscript. The author used Doubao 2.1 Pro and DeepSeek V4.1 Flash to assist with checking, revising, and polishing the exposition. The author reviewed and verified all AI-generated outputs and accepts full responsibility for the originality, correctness, and integrity of this manuscript.

\section{Maximal Operators, $2k>n$}\label{maximal-function-2k>n}
In this section, we prove Theorem \ref{maximal-estimate-delta-free} and explain the limitations of our method when $2k\leq n$.

\subsection{From $\mathcal{M}^{n,k}_{\delta}$ to $\mathcal{M}^{n,k}$}
First, we show that it suffices to show Theorem \ref{maximal-estimate-delta-free} with $\mathcal{M}^{n,k}$ replaced by $\mathcal{M}^{n,k}_{\delta}$. More precisely, we have the following.
\begin{lem}
    For $s\leq 0$, the uniform estimate 
\begin{align*}
    \|\mathcal{M}^{n,k}_{\delta}(f)\|_{L^2(\mu)}\lesssim \|f\|_{H^s(\R^n)},\;\forall\delta>0
\end{align*}
implies
\begin{align*}
    \|\mathcal{M}^{n,k}(f)\|_{L^2(\mu)}\lesssim \|f\|_{H^s(\R^n)}.
\end{align*}
And the converse is trivial, so indeed the two estimates are equivalent up to constants.
\end{lem}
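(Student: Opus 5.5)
The plan is to compare $\mathcal{M}^{n,k}_{\delta}$ with $\mathcal{M}^{n,k}$ pointwise in $V$, prove the estimate for continuous $f$ via Fatou's lemma, and then pass to general $f$ by approximation.

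\textbf{Fubini rewriting.} Fix $V\in G(n,k)$ and $x\in\R^n$. Decompose $\R^n=V\oplus V^{\perp}$, so that $\mathcal{N}_{\delta}(x+V)=\{x+y+v: v\in V,\ y\in V^{\perp},\ |y|<\delta\}$. By Fubini,
\begin{align*}
\frac{1}{\delta^{n-k}}\int_{\mathcal{N}_{\delta}(x+V)}|f| = \frac{1}{\delta^{n-k}}\int_{y\in V^{\perp},\,|y|<\delta}\Big(\int_{x+y+V}|f|\,d\mathcal{H}^k\Big)dy .
\end{align*}
The right-hand side equals $\omega_{n-k}$ (the volume of the unit ball in $\R^{n-k}$) times the average over $|y|<\delta$ of $g_x(y)=\int_{x+y+V}|f|\,d\mathcal{H}^k$. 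This gives two consequences.

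First, $\mathcal{M}^{n,k}_{\delta}f(V)\le \omega_{n-k}\mathcal{M}^{n,k}f(V)$ pointwise, which is the trivial converse direction.

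Second, suppose $f$ is continuous with bounded support. Then $y\mapsto g_x(y)$ is continuous, since we integrate a uniformly continuous compactly supported function over translated planes. Hence for every fixed $x$,
\begin{align*}
\liminf_{\delta\to 0}\mathcal{M}^{n,k}_{\delta}f(V)\ \ge\ \omega_{n-k}\, g_x(0)=\omega_{n-k}\int_{x+V}|f|\,d\mathcal{H}^k .
\end{align*}
Taking the supremum over $x$ yields $\liminf_{\delta\to0}\mathcal{M}^{n,k}_{\delta}f(V)\ge\omega_{n-k}\mathcal{M}^{n,k}f(V)$ for every $V$.

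\textbf{Fatou step.} For continuous $f$ with bounded support, Fatou's lemma in $L^2(\mu)$ then gives
\begin{align*}
\omega_{n-k}\|\mathcal{M}^{n,k}f\|_{L^2(\mu)}\le\liminf_{\delta\to0}\|\mathcal{M}^{n,k}_{\delta}f\|_{L^2(\mu)}\lesssim\|f\|_{H^s}.
\end{align*}
Here the uniform-in-$\delta$ hypothesis is used. Measurability of $V\mapsto\mathcal{M}^{n,k}f(V)$ follows from lower semicontinuity: it is a supremum of the continuous functions $V\mapsto\int_{x+V}|f|$.

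\textbf{Approximation (main obstacle).} The remaining step, and the one I expect to be the main obstacle, is passing to general measurable $f$ (we may take $f\ge0$, since only $|f|$ enters). Here $\int_{x+V}f$ is only defined for almost every translate, and the supremum over all $x$ is not stable under null modifications. I would proceed as follows.
\begin{enumerate}[(i)]
\item Mollify: set $f_\epsilon=f*\varphi_\epsilon$ with $\varphi\ge0$ and $\int\varphi=1$. Since $|\widehat{\varphi_\epsilon}|\le1$, we have $\|f_\epsilon\|_{H^s}\le\|f\|_{H^s}$, and this holds for every $s$, in particular $s\le0$. Also $f_\epsilon\ge0$ is continuous with support in a slightly larger ball.
\item Apply the continuous case to $f_\epsilon$.
\item Show $\mathcal{M}^{n,k}f\le\liminf_{\epsilon\to0}\mathcal{M}^{n,k}f_\epsilon$, at least $\mu$-a.e., and then apply Fatou once more.
\end{enumerate}
For a fixed translate $x+V$ on which $\int_{x+V}f$ is meaningful, I would try to realise $\int_{x+V}f$ as a limit of plane integrals of $f_\epsilon$. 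One route is to write $\int_{x'+V}f_\epsilon=\int\varphi_\epsilon(z)\int_{x'-z+V}f\,dz$, an average of $g_x$ over nearby translates, and then use Lebesgue differentiation of the $L^1(V^\perp)$ function $y\mapsto\int_{y+V}f$. This gives the inequality at every Lebesgue point of that function. The supremum over non-Lebesgue points must then be handled by the convention that $\mathcal{M}^{n,k}f$ is interpreted with the lower semicontinuous (or essential-sup) representative, which is the version relevant for the application to $(n,k,t)$-sets.

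If this measure-theoretic subtlety becomes unwieldy, the fallback is to state the estimate for continuous $f$ only. That version already suffices downstream, since positive measure of $(n,k,t)$-sets is tested on continuous bumps $\ge$ the indicator of a neighbourhood of the set.
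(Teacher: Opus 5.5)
Your treatment of continuous $f$ is correct, and a bit more direct than the paper's (the paper mollifies $f$ and combines $\mathcal{M}^{n,k}(f*\phi_\delta)\lesssim\mathcal{M}^{n,k}_\delta(f)$ with uniform convergence $f*\phi_\delta\to f$). The gap is the passage to general measurable $f$. That case is part of the statement, and it is what Section~\ref{proof-of-main-maximal} needs, with $f=1_{B_m}$ for a $G_\delta$ set $B_m$. Your step (iii), $\mathcal{M}^{n,k}f\le\liminf_{\epsilon\to0}\mathcal{M}^{n,k}(f*\varphi_\epsilon)$ $\mu$-a.e., cannot come out of Lebesgue differentiation, because the supremum defining $\mathcal{M}^{n,k}f(V)$ ranges over all translates, exceptional ones included. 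Concretely, let $f$ be the indicator of a rescaled bounded piece of $K\times\R^{n-2}$ from Section~\ref{examples}. Then $f=0$ a.e., so $f*\varphi_\epsilon\equiv0$. Yet $\mathcal{M}^{n,k}f$ is bounded below by a positive constant on the $(k(n-k-1)+1)$-dimensional manifold $\Sigma$, hence on the support of a smooth Frostman measure on a compact piece of it. For null $f$, step (iii) is equivalent to $\mathcal{M}^{n,k}f=0$ $\mu$-a.e. That is Kakeya-type information, and it must come from the hypothesis on $\mu$, not from differentiation theory. Redefining $\mathcal{M}^{n,k}f$ through a nicer representative changes the statement, and the continuous-only fallback proves a weaker one. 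The fallback also does not serve the application as you describe: a continuous $\psi\ge 1_U$ is $\ge 1$ on $\overline{U}$, which can be much larger than $U$.

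The missing idea is to approximate $f$ from \emph{above} by a lower semicontinuous function, rather than mollifying $f$ itself. For $f\ge0$, Vitali--Carath\'eodory gives a lower semicontinuous $g\ge f$, supported in a slightly larger ball, with $\|g-f\|_{L^2}<\epsilon$. Set $g_\delta=g*\phi_\delta$, which is continuous. Lower semicontinuity gives $g\le\liminf_{\delta\to0}g_\delta$ at \emph{every} point. Fatou on each affine plane then yields $\int_{x+V}g\,d\mathcal{H}^k\le\liminf_{\delta\to0}\mathcal{M}^{n,k}(g_\delta)(V)$ for all $x$ and $V$. Hence $\mathcal{M}^{n,k}f\le\mathcal{M}^{n,k}g\le\liminf_{\delta\to0}\mathcal{M}^{n,k}(g_\delta)$ everywhere, with no exceptional set at all. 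The continuous case gives $\|\mathcal{M}^{n,k}(g_\delta)\|_{L^2(\mu)}\lesssim\|g_\delta\|_{H^s}\le\|g\|_{H^s}\le\|f\|_{H^s}+\|g-f\|_{L^2}$. This last inequality is exactly where $s\le0$ is used; the fact that your argument never uses $s\le0$ is a symptom of the missing step. Fatou in $L^2(\mu)$ and $\epsilon\to0$ finish the proof.
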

\begin{proof}
    The proof is a standard argument.
First we consider continuous functions with compact support.
Suppose $f\in C(\R^n)$ with $\supp f\subset B(0,1)$.  We can assume that $f\geq 0$. We choose $\phi\in C_0^{\infty}$ with $\phi\geq 0$,  $\int\phi=1$. For $\delta>0$, we define $\phi_{\delta}(x)=\delta^{-n}\phi(x/\delta)$. Then 
\begin{align*}
    \int_{x+V}(f*\phi_{\delta})(u)d\H^k(u)\lesssim \delta^{-n}\int_{V}\int f(x+u-y)\phi(y/\delta)dyd\H^k(u).
\end{align*}
We consider the decomposition $y=a+b$, $a\in V$, $b\in V^{\bot}$. Then by Fubini's theorem,
\begin{align*}
    &\int_{V}\int f(x+u-y)\phi(y/\delta)dyd\H^k(u)\\
    =&\int_V\int_{V^{\bot}}\left(\int_Vf(x+u-a-b)d\H^k(u)\right)\phi((a+b)/\delta)d\H^k(a)d\H^{n-k}(b)\\
     =&\int_V\int_{V^{\bot}}\left(\int_Vf(x+u-b)d\H^k(u)\right)\phi((a+b)/\delta)d\H^k(a)d\H^{n-k}(b).
\end{align*}
Also, 
\begin{align*}
    \int_V\phi((a+b)/\delta)d\H^k(a)\lesssim \delta^k \cdot1_{B(0,\delta)\cap V^{\bot}}(b).
\end{align*}
Then 
\begin{align*}
    &\int_{V}\int f(x+u-y)\phi(y/\delta)dyd\H^k(u)\\
    \lesssim&\delta^k\cdot\int_{B(0,\delta)\cap V^{\bot}}\int_Vf(x+u-b)d\H^k(u)d\H^{n-k}(b)\\
    \lesssim& \delta^{k}\cdot\int_{\mathcal{N}_{\delta}(x+V)}f.
\end{align*}
Hence 
\begin{align*}
    \mathcal{M}^{n,k}(f*\phi_{\delta})(V)\lesssim \mathcal{M}_{\delta}^{n,k}(f)(V),\;V\in G(n,k).
\end{align*}
Then we have 
\begin{align*}
    \|\mathcal{M}^{n,k}(f*\phi_{\delta})\|_{L^2(\mu)}\lesssim \|\mathcal{M}_{\delta}^{n,k}(f)\|_{L^2(\mu)}\lesssim\|f\|_{H^s(\R^n)},\;\forall\delta>0.
\end{align*}
On the other hand, since $f$ is continuous with compact support, $f*\phi_{\delta}$ converges uniformly to $f$ as $\delta\to 0$. Then 
\begin{align*}
    \mathcal{M}^{n,k}(f)(V)\leq \lim_{\delta\to0}\inf\mathcal{M}^{n,k}(f*\phi_{\delta})(V).
\end{align*}
By Fatou's lemma, we have 
\begin{align*}
    \|\mathcal{M}^{n,k}(f)\|_{L^2(\mu)}\leq\lim_{\delta\to0}\inf\|\mathcal{M}^{n,k}(f*\phi_{\delta})\|_{L^2(\mu)}\lesssim\|f\|_{H^s(\R^n)}.
\end{align*}

Then we turn to the general case. Suppose $f$ is a non-negative, Lebesgue measurable function with $\supp f\subset B(0,1)$. For any $\epsilon>0$, we can choose a lower semicontinuous function $g$ such that $f\leq g$ and $\|f-g\|_{L^2(\R^n)}<\epsilon$. Let $g_\delta=g*\phi_{\delta}\in C(\R^n)$,  $\supp g*\phi_{\delta}\subset B(0,2)$.
By the previous result on continuous functions with compact support, we have
\begin{align*}
   \|\mathcal{M}^{n,k}(g_\delta)\|_{L^2(\mu)}\lesssim\|g_\delta\|_{H^s(\R^n)}\leq\|f\|_{H^s(\R^n)}+\epsilon.
\end{align*}
On the other hand, since $g$ is lower semicontinuous, $g\leq\lim\limits _{\delta\to0}\inf g_{\delta}$. By Fatou's lemma,
\begin{align*}
    \|\mathcal{M}^{n,k}(f)\|_{L^2(\mu)}\leq\|\mathcal{M}^{n,k}(g)\|_{L^2(\mu)}\leq \lim\limits _{\delta\to0}\inf\|\mathcal{M}^{n,k}(g_\delta)\|_{L^2(\mu)}\lesssim\|f\|_{H^s(\R^n)}+\epsilon.
\end{align*}
Let $\epsilon\to0$, and we complete the proof.
\end{proof}

\subsection{$L^2$-estimates for $\mathcal{M}^{n,k}_{\delta}$}
First, we can drop the supremum and just consider $k$-plane transform.  This is a standard measurable selection argument.
\begin{lem}\label{Borel-selection}
    For any measurable $f$ with $\supp f\subset B(0,1)$, we can find a Borel measurable map $V\mapsto x_V\in \R^n$ such that 
\begin{align}
    \mathcal{M}_{\delta}^{n,k}(f)(V)\lesssim \delta^{k-n}\int_{\mathcal{N}_{\delta}(x_V+V)}|f|,\;\forall V\in G(n,k).
\end{align}
\end{lem}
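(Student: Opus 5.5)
The plan is to make the supremum in the definition of $\mathcal{M}_{\delta}^{n,k}$ attained up to a constant by a Borel choice of translate, using a countable dense family of candidates and a lower-semicontinuity/measurability argument. Fix $\delta>0$ and $f$ with $\supp f\subset B(0,1)$; we may assume $f\geq 0$ and $f\in L^1$, since otherwise both sides are infinite. If the supremum vanishes, any choice works, so the constant map $x_V=0$ handles that part.

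\textbf{Step 1 (reduction to a bounded set of translates).} Only translates $x$ for which $\mathcal{N}_{\delta}(x+V)$ meets $B(0,1)$ matter. Moreover, replacing $x$ by $x-\pi_V x$ does not change $x+V$. Hence
\begin{align*}
\mathcal{M}_{\delta}^{n,k}(f)(V)=\delta^{k-n}\sup_{x\in B(0,2)}F(x,V),\qquad F(x,V):=\int_{\mathcal{N}_{\delta}(x+V)}f .
\end{align*}

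\textbf{Step 2 (continuity).} The function $F$ is jointly continuous in $(x,V)\in\R^n\times G(n,k)$. Indeed, $1_{\mathcal{N}_{\delta}(x+V)}(y)=1_{\{|\pi_{V^\perp}(y-x)|<\delta\}}$, or $\leq\delta$ depending on convention. As $(x',V')\to(x,V)$, this indicator converges pointwise off the set $\{|\pi_{V^\perp}(y-x)|=\delta\}$, which is Lebesgue null. Dominated convergence with dominant $f\in L^1$ then gives continuity. This makes $G(V):=\sup_{x\in B(0,2)}F(x,V)$ a supremum over a countable dense set $\{q_j\}\subset B(0,2)$, so $G$ is Borel (in fact lower semicontinuous).

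\textbf{Step 3 (selection).} Define $j(V)$ to be the least index $j$ with $F(q_j,V)>\tfrac12 G(V)$ when $G(V)>0$, and $j(V)=1$ otherwise. Set $x_V:=q_{j(V)}$. Each set $\{V: j(V)=j\}$ is defined by finitely many strict and non-strict inequalities between Borel functions of $V$, namely $F(q_i,V)$ and $G(V)$. Hence $V\mapsto x_V$ is a countably-valued Borel map, and
\begin{align*}
\mathcal{M}_{\delta}^{n,k}(f)(V)=\delta^{k-n}G(V)\leq 2\,\delta^{k-n}\int_{\mathcal{N}_{\delta}(x_V+V)}|f| ,
\end{align*}
which is the claim.

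The only step needing care is Step 2: continuity in $V$ uses that the boundary of the tube has measure zero and that $f\in L^1$. If $f$ is merely measurable but not integrable on $B(0,1)$, the maximal function is identically $+\infty$ on some tubes, and any choice of $x_V$ realizing infinite integrals works. This can be handled by the same countable-dense argument, with $F$ now lower semicontinuous by Fatou's lemma.
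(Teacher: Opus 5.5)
Your proposal is correct and follows the paper's proof: you take a countable dense set of translates $\{q_j\}$, form the Borel sets where $F(q_j,V)$ is at least half the supremum, and select the least index. Your joint-continuity argument via dominated convergence also justifies the Borel measurability in $V$, which the paper only asserts. Two small points to tidy: in the non-integrable case your strict rule $F(q_j,V)>\tfrac12 G(V)$ is vacuous because $G\equiv\infty$, so, as the paper does, you should fix a point $y$ where $f$ is not locally integrable and take $x_V=y$ (or a $q_j$ within $\delta$ of $y$); and the reduction to $B(0,2)$ needs $\delta\le 1$, otherwise use $B(0,1+\delta)$.
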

\begin{proof}
    We can assume that $f$ is nonnegative and Borel measurable. For each $V$ and $x$, we define 
    \begin{align*}
        F(V,x)=\delta^{k-n}\int_{\mathcal{N}_{\delta}(x+V)}f.
    \end{align*}
    For fixed $x$, the function $V\mapsto F(V,x)$ is Borel measurable. 
    We choose a countable dense subset $\{q_j\}_{j\geq1}\subset \R^n$. If $f\in L^1$, the function $x\mapsto F(V,x)$ is continuous for fixed $V$.  Since $f$ has bounded support, by continuity, for any $V$, $\mathcal{M}_{\delta}^{n,k}(f)(V)=\sup_{j\geq1}F(V,q_j)$.
    If $f\notin L^1$, since $B(0,1)$ is precompact, there exists $y\in B(0,1)$ such that $f$ is not integrable over any neighborhood of $y$. In particular, $F(V,y)=\infty$. Choose $q_j$ such that $|y-q_j|<\delta$, then $\mathcal{N}_{\delta}(q_j+V)$ contains a neighborhood of $y$ for all $V$. Then $\mathcal{M}_{\delta}^{n,k}(f)(V)=\sup_{j\geq1}F(V,q_j)=\infty$. In both cases, we have 
    \begin{align*}
        \mathcal{M}_{\delta}^{n,k}(f)(V)=\sup_{j\geq1}F(V,q_j).
    \end{align*}
    
    For $j\geq 1$, we define
    \begin{align*}
        E_j=\{V\in G(n,k):F(V,q_j)\geq \frac{1}{2}\mathcal{M}_{\delta}^{n,k}(f)(V)\}
    \end{align*}
    $E_j$ is Borel since $\mathcal{M}_{\delta}^{n,k}(f)$ and $V\mapsto F(V,x)$ are Borel measurable. And $\{E_j\}_{j\geq1}$ covers $G(n,k)$. Then we define 
    \begin{align*}
        j(V)=\min\{j\geq 1:V\in E_j\}
    \end{align*}
    and 
    \begin{align*}
        x_V=q_{j(V)}=\sum_{j\geq 1}q_j1_{\{j(V)=j\}}.
    \end{align*}
    By construction of $E_j$, $V\mapsto x_V$ is Borel measurable and 
    \begin{align*}
        \mathcal{M}_{\delta}^{n,k}(f)(V)\leq 2\delta^{k-n}\int_{\mathcal{N}_{\delta}(x_V+V)}f,\;\forall V\in G(n,k).
    \end{align*}
\end{proof}

Then, if we focus on the \(L^2\)-norm, we can reduce $P_{\delta}$ to a translation-free operator, $C_{\delta}$. In other words, we can ignore the effect of the translation $V\mapsto x_V$.
\begin{lem}\label{translation-remove}
    Let
\begin{align*}
    P_{\delta}(f)(V)=\delta^{k-n}\int_{\mathcal{N}_{\delta}(x_V+V)}f,
\end{align*}
where $V\mapsto x_V$ is given by Lemma \ref{Borel-selection},
and let
\begin{align*}
    C_{\delta}(f)(V)=\delta^{k-n}\int_{\mathcal{N}_{\delta}(V)}f.
\end{align*}
Suppose that for some $s\in\R$, we have  
\begin{align}\label{Assmption-H^s-to-L^2}
    \|C_{\delta}(f)\|_{L^2(\mu)}\lesssim \|f\|_{H^s(\R^n)},\;\supp f\subset B(0,1).
\end{align}
Then for the same $s$, we have 
\begin{align}
 \|P_{\delta}(f)\|_{L^2(\mu)}\lesssim \||f|\|_{H^s(\R^n)},\;\supp f\subset B(0,1).
\end{align}
\end{lem}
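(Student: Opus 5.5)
We argue by duality and a $TT^*$ computation. We may assume $f\geq 0$. Write $P_\delta(f)(V)=\delta^{k-n}\int f(y)\,1_{\mathcal{N}_\delta(V)}(y-x_V)\,dy$. For $g\in L^2(\mu)$ the adjoint is
\begin{align*}
P_\delta^*g(y)=\delta^{k-n}\int g(V)\,1_{\mathcal{N}_\delta(V)}(y-x_V)\,d\mu(V).
\end{align*}
By duality, $\|P_\delta f\|_{L^2(\mu)}\lesssim\|f\|_{H^s}$ for $\supp f\subset B(0,1)$ is equivalent to $\|\chi P_\delta^*g\|_{H^{-s}}\lesssim\|g\|_{L^2(\mu)}$, where $\chi$ is a cutoff to $B(0,1)$; one has to be a little careful with this localisation. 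Since $f\ge0$, we may also take $g\ge0$. It therefore suffices to bound
\begin{align*}
\langle P_\delta P_\delta^* g,g\rangle=\iint g(V)g(W)\,K(V,W)\,d\mu(V)d\mu(W),
\end{align*}
where the kernel is $K(V,W)=\delta^{2(k-n)}\int 1_{\mathcal{N}_\delta(V)}(y-x_V)\,1_{\mathcal{N}_\delta(W)}(y-x_W)\,\widehat{\ }\ldots$. More precisely, we work on the Fourier side with the weight $\langle\xi\rangle^{-2s}$, so the kernel is $\int \widehat{\sigma_V^\delta}(\xi)\overline{\widehat{\sigma_W^\delta}(\xi)}e^{-2\pi i\xi\cdot(x_V-x_W)}\langle\xi\rangle^{-2s}\,d\xi$, with $\sigma_V^\delta=\delta^{k-n}1_{\mathcal{N}_\delta(V)\cap B(0,2)}$.

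The key point is that translations only introduce unimodular phases on the Fourier side. We plan to dominate the kernel, in absolute value, by the same expression with the phases removed, using $|\widehat{\sigma_V^\delta}|$. The quantity
\begin{align*}
\iint g(V)g(W)\int|\widehat{\sigma_V^\delta}||\widehat{\sigma_W^\delta}|\langle\xi\rangle^{-2s}\,d\xi\,d\mu\,d\mu
\end{align*}
should then be controlled by the hypothesis (\ref{Assmption-H^s-to-L^2}). The cleanest way to arrange this is to replace $1_{\mathcal{N}_\delta(V)\cap B}$ by a smooth tensor-type bump $\psi_V^\delta(y)=\delta^{k-n}\phi(\pi_V y)\,\eta(\pi_{V^\perp}y/\delta)$ with $\phi,\eta$ chosen so that $\widehat\phi,\widehat\eta\ge0$. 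This is possible by taking $\phi=\phi_0*\tilde\phi_0$, and the indicator is dominated by such a bump up to constants. Positivity is what makes the dominating step work.

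Then $\widehat{\psi_V^\delta}\ge0$, and the translated version of the operator, $\tilde P_\delta$, satisfies
\begin{align*}
\|\tilde P_\delta^*g\|_{H^{-s}}^2=\int\Big|\int g(V)\widehat{\psi^\delta_V}(\xi)e^{-2\pi i\xi\cdot x_V}d\mu\Big|^2\langle\xi\rangle^{-2s}d\xi \le \int\Big(\int g\,\widehat{\psi^\delta_V}\,d\mu\Big)^2\langle\xi\rangle^{-2s}d\xi=\|\tilde C_\delta^*g\|_{H^{-s}}^2,
\end{align*}
by the triangle inequality inside, since $g\ge0$ and $\widehat{\psi_V^\delta}\ge0$. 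Finally we need $\|\tilde C_\delta^* g\|_{H^{-s}}\lesssim\|g\|_{L^2(\mu)}$, which is the dual form of the hypothesis for the smoothed operator $\tilde C_\delta$. We derive this from (\ref{Assmption-H^s-to-L^2}) by noting that $\tilde C_\delta f\lesssim C_{c\delta}(|f|)$ pointwise (after rescaling the support so that it sits in $B(0,1)$). Because $\psi$ is nonnegative and has only rapidly decaying tails, we may either take $\phi,\eta$ compactly supported or sum dyadic shells. Collecting the pieces gives $P_\delta f\lesssim\tilde P_\delta f$, hence $\|P_\delta f\|\lesssim\||f|\|_{H^s}$.

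The main obstacle is the bookkeeping between the two forms of the estimate. Duality converts a statement about $f$ supported in $B(0,1)$ into one about $\chi\tilde C^*_\delta g$, and dropping $\chi$ in the dual estimate for $\tilde C_\delta$ is harmless only if $\psi^\delta_V$ itself is compactly supported in a fixed ball. We therefore choose $\phi_0,\eta_0$ compactly supported, so that $\phi,\eta$ are compactly supported with nonnegative Fourier transforms. Then $\tilde C_\delta^*g$ is supported in a fixed ball, and the hypothesis, applied at a comparable $\delta$ after a harmless dilation, yields the dual bound with no cutoff issues.
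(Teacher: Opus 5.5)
Up to the last step your strategy is the paper's. You dominate the rough tube by a smooth bump with nonnegative Fourier transform, note that $x_V$ only contributes a unimodular phase, and use the triangle inequality on the Fourier side to get $\|\tilde P_\delta^*g\|_{H^{-s}}\le\|\tilde C_\delta^*g\|_{H^{-s}}$. This is exactly the paper's kernel comparison $|K_Q|\le K_D$.

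\textbf{The gap} is in how you then obtain $\|\tilde C_\delta^*g\|_{H^{-s}}\lesssim\|g\|_{L^2(\mu)}$. Since $\|F\|_{H^{-s}}=\sup_{\|h\|_{H^s}\le1}|\langle h,F\rangle|$ with $h$ ranging over \emph{signed} functions, this dual bound is equivalent to the linear estimate $\|\tilde C_\delta h\|_{L^2(\mu)}\lesssim\|h\|_{H^s}$ for all signed $h$. Your pointwise bound $|\tilde C_\delta h|\le\tilde C_\delta|h|\lesssim C_{c\delta}(|h|)$ only yields $\|\tilde C_\delta h\|_{L^2(\mu)}\lesssim\||h|\|_{H^s}$.

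For $s<0$ this is not controlled by $\|h\|_{H^s}$, and $s<0$ is the case that matters here ($s=n/2-k+\epsilon$ or $(k(n-k-1)+1-t)/2+\epsilon$). For example, take $h_N(x)=\chi(x)\cos(Nx_1)$ with $0\le\chi\in C_0^\infty$. Then $\|h_N\|_{H^s}\sim N^{s}\to0$, while $\||h_N|\|_{H^s}\to\frac{2}{\pi}\|\chi\|_{H^s}>0$. Nor can you restrict the supremum to $h\ge0$ when $s<0$. For $s\ge0$ your argument does close, because the extremiser $(1-\Delta)^{-s}F$ of a nonnegative $F$ is nonnegative.

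\textbf{The missing idea} is to choose the smooth tube so that the translation-free operator is an exact \emph{linear} image of the operator in the hypothesis. Your tensor bump does not allow this, because its cutoff $\phi(\pi_V y)$ depends on $V$. The paper instead uses a $V$-independent cutoff: $\Phi_{V,\delta}=\chi\cdot H_{V,\delta}$, where $H_{V,\delta}=\phi_\delta*\mathcal{H}^k|_V$ and $\chi,\phi$ are radial with $\hat\chi,\hat\phi\ge0$. This construction gives three things:
\begin{enumerate}[(i)]
\item Positivity survives: $\widehat{\Phi_{V,\delta}}=\hat\chi*\widehat{H_{V,\delta}}\ge0$.
\item For every signed $f$ one has the identity $D_\delta f=C(\phi_\delta*(\chi f))$, where $C(h)(V)=\int_Vh\,d\mathcal{H}^k$.
\item The hypothesis (passed to $C$ by Fatou, after a dilation to fix the support), together with $\|\phi_\delta*(\chi f)\|_{H^s}\lesssim\|f\|_{H^s}$, gives $\|D_\delta\|_{H^s\to L^2(\mu)}\lesssim1$. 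This is what the $TT^*$ comparison needs.
\end{enumerate}

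Alternatively, you could keep a fixed cutoff $\chi(y)$ and a radially decreasing transversal profile $\eta$ with $\hat\eta\ge0$, and use the layer-cake formula $\int\chi(y)f(y)\,\delta^{k-n}\eta(|\pi_{V^\perp}y|/\delta)\,dy=\int_0^\infty\rho^{n-k}C_{\rho\delta}(\chi f)(V)\,|\eta'(\rho)|\,d\rho$. This is again linear in $f$ and invokes the hypothesis at the scales $\rho\delta$.
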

\begin{proof}
    The idea is to replace the rough tube by a smooth tube with a nonnegative Fourier transform. Translating its center only multiplies by a phase of modulus 1 in the Fourier side, so the associated operator norm cannot increase.  

    We choose two nonnegative radial functions $\phi,\chi\in C^{\infty}_0(B(0,3))$ such that $\phi,\chi\geq 1 $ on $B(0,2)$ and $\hat{\phi},\hat{\chi}\geq 0$. Let $\phi_{\delta}(x)=\delta^{-n}\phi(x/\delta)$. We define 
    \begin{align*}
        H_{V,\delta}(x)=\int_{V}\phi_{\delta}(x-v)d\H^kv
    \end{align*}
    and 
    \begin{align*}
        \Phi_{V,\delta}(x)=\chi(x)\cdot H_{V,\delta}(x).
    \end{align*}
    We consider the operators
    \begin{align*}
        Q_{\delta}(f)(V)=\int_{\R^n}\Phi_{V,\delta}(x-x_V)f(x)dx
    \end{align*}
    and 
    \begin{align*}
        D_{\delta}(f)(V)=\int_{\R^n}\Phi_{V,\delta}(x)f(x)dx.
    \end{align*}
    These two operators are smooth versions of $P_{\delta}$, $C_{\delta}$ respectively.

    Note that $P_{\delta}$ is dominated by $Q_{\delta}$. We can assume $|x_V|\leq 2$ since $\supp f\subset B(0,1)$. Then $\chi(x-x_V)\geq 1_{B(0,1)}$.
    And 
    \begin{align*}
        H_{V,\delta}(x-x_V)&=\delta^{-n}\int_{V}\phi_{\delta}(x-x_V-v)d\H^kv\\
        &=\delta^{k-n}\int_{V}\phi((x-x_V)/\delta-v)d\H^kv.
    \end{align*}
    When $x\in \mathcal{N}_{\delta}(x_V+V)$, $d((x-x_V)/\delta, V)\leq 1$. We have 
    \begin{align*}
        H_{V,\delta}(x-x_V)\gtrsim_{n,k}\delta^{k-n}\cdot1_{\mathcal{N}_{\delta}(x_V+V)}
    \end{align*}
    Hence we have 
    \begin{align}
        \delta^{k-n}\cdot1_{B(0,1)\cap\mathcal{N}_{\delta}(x_V+V)}(x)\leq \Phi_{V,\delta}(x-x_V).
    \end{align}
    As a result,
    \begin{align}
        \|P_{\delta}(f)\|_{L^2(\mu)}\lesssim_{n,k}\|Q_{\delta}(|f|)\|_{L^2(\mu)}.
    \end{align}
    
    Also, $D_{\delta}$ is dominated by (the limit version of) $C_{\delta}$.
    By a direct computation,
    \begin{align*}
        D_{\delta}(f)=C(\phi_{\delta} *\chi f),
    \end{align*}
    where $C$ is the limit operator of $C_{\delta}$ given by 
    \begin{align*}
        C(f)(V)=\int_{V}fd\H^k.
    \end{align*}
    Note that $\supp \phi_{\delta} *\chi f\subset B(0,6)$, by the assumption (\ref{Assmption-H^s-to-L^2}) and Fatou's lemma, we have 
    \begin{align}
       \| D_{\delta}(f)\|_{L^2(\mu)}\lesssim\|\phi_{\delta} *\chi f\|_{H^s(\R^n)}\lesssim \|f\|_{H^s(\R^n)}.
    \end{align}
    
    Now it suffices to show that 
    \begin{align}
        \|Q_{\delta}\|_{H^s(\R^n)\mapsto L^2(\mu)}\lesssim \|D_{\delta}\|_{H^s(\R^n)\mapsto L^2(\mu)}.
    \end{align}
    We prove it using the Fourier transform and the $TT^*$ method.  By a direct computation, 
     \begin{align*}
        Q_{\delta}Q_{\delta}^*G(V)=\int K_{Q_{\delta}}(V,W)G(W)d\mu(W),\;G\in L^2(\mu),
    \end{align*}
    where
    \begin{align}
        K_{Q}(V,W)=\int_{\R^n}e^{-2\pi i(x_V-x_W)\cdot\xi}\widehat{\Phi_{V,\delta}}(\xi)\widehat{\Phi_{W,\delta}}(\xi)(1+|\xi|^2)^{-s}d\xi.
    \end{align}
    Similarly, the kernel of $D_{\delta}D_{\delta}^*$ is given by
    \begin{align}
        K_{D}(V,W)=\int_{\R^n}\widehat{\Phi_{V,\delta}}(\xi)\widehat{\Phi_{W,\delta}}(\xi)(1+|\xi|^2)^{-s}d\xi.
    \end{align}
    Note that
    \begin{align*}
        \widehat{\Phi_{V,\delta}}(\xi)=(\hat{\chi}*\widehat{H_{V,\delta}})(\xi)=\int_{V^{\bot}}\hat{\phi}(\delta\eta)\hat{\chi}(\xi-\eta)d\H^{n-k}(\eta)\geq 0,
    \end{align*} 
    we have 
    \begin{align}
        |K_{Q}(V,W)|\leq K_{D}(V,W),\;\forall V,W.
    \end{align}
    Then 
    \begin{align*}
        \|Q_{\delta}^*G\|_{H^{-s}(\R^n)}\leq \|D_{\delta}^*G\|_{H^{-s}(\R^n)}\lesssim \|D_{\delta}\|_{H^s(\R^n)\mapsto L^2(\mu)}\cdot\|G\|_{L^2(\mu)}.
    \end{align*}
    Thus 
    \begin{align*}
        \|Q_{\delta}\|_{H^s(\R^n)\mapsto L^2(\mu)}\lesssim \|D_{\delta}\|_{H^s(\R^n)\mapsto L^2(\mu)}.
    \end{align*}
\end{proof}

Now it suffices to consider $C_{\delta}$. And we assume that $f\geq 0$. 
We can take $\phi\in C^{\infty}_0(\R^n)$ with $\phi\geq 0$ and $\phi\gtrsim 1$ on $B(0,2)$. Let $\phi_{\delta}(x)=\delta^{-n}\phi(x/\delta)$. Since $\supp f\subset B(0,1)$, we have 
\begin{align*}
    C_{\delta}(f)(V)\lesssim C(f*\phi_{\delta})(V),\;\forall\delta>0,
\end{align*}
where $C$ is the limit operator given by
\begin{align*}
    C(f)(V)=\int_{V}fd\H^k.
\end{align*}
Thus it suffices to show that 
\begin{align}\label{L^2-to-Sobolev}
    \|C(f)\|_{L^2(\mu)}\lesssim_{\epsilon}
    \begin{cases}
        \|f\|_{H^{(k(n-k-1)+1-t)/2+\epsilon}(\R^n)},\;t\leq(k-1)(n-k)+1;\\
        \|f\|_{H^{n/2-k+\epsilon}(\R^{n})},\;t\geq(k-1)(n-k)+1.
    \end{cases}.
\end{align}

Let
\begin{align*}
    R(f)(\omega)=\int_0^1f(r\omega)r^{k-1}dr,\;\omega\in S^{n-1},
\end{align*}
and let $S$ denote the spherical average given by
\begin{align*}
    S(F)(V)=\int_{V\cap S^{n-1}}F(\omega)d\sigma_V^{k-1}(\omega),
\end{align*}
where $\sigma_V^{k-1}$ is the standard surface measure on the sphere $V\cap S^{n-1}$.
By the polar coordinate formula, we can rewrite $C(f)$ as
\begin{align*}
    C(f)(V)=\int_{V\cap S^{n-1}}\int_0^1f(r\omega)r^{k-1}drd\sigma_V^{k-1}(\omega)=SR(f)(V).
\end{align*}

We will estimate the operators $R$ and $S$ respectively.
For the operator $R$, we have the following Sobolev norm estimate.
\begin{lem}\label{Radial-operator}
    \begin{align}
    \|R(f)\|_{H^\alpha(S^{n-1})}\lesssim\|f\|_{H^{\beta}(\R^n)},\;\beta\geq \alpha,\;\alpha>n/2-k.
\end{align}
\end{lem}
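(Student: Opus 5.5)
Since $\|f\|_{H^\alpha}\le\|f\|_{H^\beta}$ for $\beta\ge\alpha$, it suffices to treat $\beta=\alpha$. As $\supp f\subset B(0,1)$ in our application, I would fix a smooth radial cutoff $\chi$ with $\chi\equiv1$ on $B(0,1)$ and $\supp\chi\subset B(0,2)$. The plan is to diagonalize everything with spherical harmonics. Write
\begin{align*}
f(r\omega)=\sum_{l,j}f_{l,j}(r)Y_{l,j}(\omega),
\end{align*}
with $\{Y_{l,j}\}$ an orthonormal basis of degree-$l$ harmonics on $S^{n-1}$. Then the coefficients of $R(f)$ are
\begin{align*}
a_{l,j}=\int_0^1 f_{l,j}(r)r^{k-1}\,dr=\langle f_{l,j}Y_{l,j},\,g_l\rangle,\qquad g_l(x)=\chi(|x|)\,|x|^{k-n}Y_{l,j}(x/|x|),
\end{align*}
where the pairing is on $\R^n$ with Lebesgue measure. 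Also $\|R(f)\|_{H^\alpha(S^{n-1})}^2\sim\sum(1+l)^{2\alpha}|a_{l,j}|^2$. By Hecke--Bochner, the Fourier transform preserves the decomposition into pieces of the form (radial)$\times Y_{l,j}$, and the multiplier $(1+|\xi|^2)^{\alpha}$ is radial. Hence $\|f\|_{H^\alpha}^2=\sum_{l,j}\|f_{l,j}Y_{l,j}\|_{H^\alpha}^2$. By Cauchy--Schwarz in the $H^\alpha$--$H^{-\alpha}$ pairing, $|a_{l,j}|\le\|f_{l,j}Y_{l,j}\|_{H^\alpha}\|g_l\|_{H^{-\alpha}}$. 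So the whole lemma reduces to the single uniform estimate
\begin{align*}
\|g_l\|_{H^{-\alpha}(\R^n)}\lesssim (1+l)^{-\alpha},\qquad \alpha>n/2-k.
\end{align*}

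For this estimate I would first compute the Fourier transform of the uncut homogeneous function. By the classical formula (Stein), $|x|^{k-n}Y_l(x/|x|)$ has Fourier transform
\begin{align*}
c_l\,|\xi|^{-k}Y_l(\xi/|\xi|),\qquad c_l=i^{-l}\pi^{n/2-k}\frac{\Gamma((l+k)/2)}{\Gamma((l+n-k)/2)}.
\end{align*}
Stirling gives $|c_l|\sim(1+l)^{k-n/2}$. Heuristically, a function at $|x|\sim1$ with angular frequency $l$ lives at $|\xi|\gtrsim l$. On that region,
\begin{align*}
|c_l|^2\int_{|\xi|\gtrsim l}|\xi|^{-2k}(1+|\xi|)^{-2\alpha}\,d\xi\sim l^{2k-n}\,l^{n-2k-2\alpha}=l^{-2\alpha},
\end{align*}
and the integral converges at infinity exactly when $\alpha>n/2-k$. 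This is where the hypothesis enters.

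The main obstacle is making the cutoff rigorous: $\widehat{g_l}=\hat\chi*\big(c_l|\xi|^{-k}Y_l\big)$, and I must show this convolution does not leak substantial mass into $|\xi|\ll l$. I would first split $g_l=\chi(|x|)(1-\psi(l|x|))|x|^{k-n}Y_l+\chi\,\psi(l|x|)|x|^{k-n}Y_l$, with $\psi$ a bump near the origin. The piece supported in $|x|\lesssim 1/l$ has $L^1$ norm $\lesssim l^{-k}$ times $\|Y_l\|_\infty\lesssim l^{(n-2)/2}$. I would estimate it crudely, or better keep it exactly homogeneous, so that its transform is a dilate.

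The remaining piece is smooth at scale $1/l$ in $r$. For it I would show that $\widehat{g_l}(\xi)$ is $O(l^{-N})$ for $|\xi|\le l/10$. The route is to write $\widehat{g_l}(\rho\theta)$ via Hecke--Bochner as $Y_l(\theta)$ times a Hankel transform $\int h(r)J_{l+(n-2)/2}(2\pi r\rho)r^{n/2}dr$. One then uses the super-exponential smallness of $J_\nu(z)$ for $z\le\nu/2$, combined with $|h(r)|\lesssim r^{k-n}$ for $r\gtrsim1/l$.

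For $|\xi|\gtrsim l$, I would bound the Hankel transform using the uniform bounds $|J_\nu(z)|\lesssim \nu^{-1/3}$ and $|J_\nu(z)|\lesssim z^{-1/2}$ for $z\ge\nu$. Alternatively, for the smooth remaining piece only, $|\widehat{g_l}|\lesssim|c_l||\xi|^{-k}$ plus a rapidly decaying error follows by comparing with the uncut transform. Either way this should reproduce the heuristic bound $l^{-2\alpha}$. Summing over $(l,j)$ then gives
\begin{align*}
\|R(f)\|_{H^\alpha(S^{n-1})}^2\lesssim\sum_{l,j}(1+l)^{2\alpha}(1+l)^{-2\alpha}\|f_{l,j}Y_{l,j}\|_{H^\alpha}^2=\|f\|_{H^\alpha}^2.
\end{align*}
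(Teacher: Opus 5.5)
Your reduction is correct and takes a genuinely different route from the paper's. The paper dualizes, $R^*G(x)=|x|^{k-n}G(x/|x|)$, and quotes \cite[Lemma 3]{BMS19}, a Gagliardo-norm computation, for $\|R^*G\|_{H^{s}(\R^n)}\lesssim\|G\|_{H^{s}(S^{n-1})}$ with $s<k-n/2$. Your Hecke--Bochner orthogonality shows that this dual bound, with $s=-\alpha$ and the extension truncated by $\chi$, is equivalent to the single-mode estimate $\|g_l\|_{H^{-\alpha}(\R^n)}\lesssim(1+l)^{-\alpha}$. Stein's constant $|c_l|\sim(1+l)^{k-n/2}$ then makes the threshold $\alpha>n/2-k$ transparent. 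The truncation is not cosmetic: the untruncated transform $c_l|\xi|^{-k}Y_l$ is not square integrable near $\xi=0$ when $2k>n$.

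The gap is that all the content now sits in the single-mode estimate, and your argument for it on $|\xi|\gtrsim l$ does not work as proposed. Write $\widehat{g_l}(\rho\theta)=G_l(\rho)Y_l(\theta)$ with $G_l(\rho)\propto\rho^{1-n/2}\int_0^2\chi(r)r^{k-n/2}J_\nu(2\pi r\rho)\,dr$. Inserting $|J_\nu(z)|\lesssim\nu^{-1/3}$ and $|J_\nu(z)|\lesssim z^{-1/2}$ (for $z\ge2\nu$) in absolute value discards all oscillation in $r$. For $\rho\gg l$ this gives only $|G_l(\rho)|\lesssim\rho^{1-n/2}\int_{\nu/(\pi\rho)}^{2}r^{k-n/2}(r\rho)^{-1/2}\,dr\sim\rho^{(1-n)/2}$. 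Fed into $\int_{cl}^\infty|G_l|^2(1+\rho^2)^{-\alpha}\rho^{n-1}\,d\rho$, this is finite only for $\alpha>1/2$, and even then has size $l^{1-2\alpha}$. You need $l^{-2\alpha}$ for every $\alpha>n/2-k$, including negative $\alpha$ when $2k>n$. Your alternative, comparing with the uncut transform, is the right idea, but it is only asserted. Also, ``rapidly decaying error'' is inaccurate: for $|\xi|\sim l$ the error is as large as the main term.

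Here is a short way to close the gap.
\begin{enumerate}[(i)]
\item On $|\xi|\ge cl$, split $g_l=h_l+e_l$ with $h_l=|x|^{k-n}Y_l(x/|x|)$ and $e_l=(\chi(|x|)-1)h_l$.
\item The $h_l$ part is exactly your heuristic: $|c_l|^2\int_{|\xi|\ge cl}|\xi|^{-2k}(1+|\xi|^2)^{-\alpha}\,d\xi\sim l^{-2\alpha}$.
\item The function $e_l$ is smooth and supported in $|x|\ge1$. Iterating $\Delta(F(r)Y_l)=(F''+\frac{n-1}{r}F'-\frac{l(l+n-2)}{r^2}F)Y_l$ gives $\Delta^Ne_l=F_N(r)Y_l$ with $|F_N(r)|\lesssim_N(1+l)^{2N}r^{k-n-2N}1_{\{r\ge1\}}$. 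Hence $\|\Delta^Ne_l\|_{L^2}\lesssim(1+l)^{2N}$ once $4N>2k-n$, and therefore $\int_{|\xi|\ge cl}|\widehat{e_l}|^2(1+|\xi|^2)^{-\alpha}\,d\xi\lesssim(1+l)^{4N}\sup_{|\xi|\ge cl}|\xi|^{-4N}(1+|\xi|^2)^{-\alpha}\lesssim l^{-2\alpha}$ for $N$ large.
\item On $|\xi|\le cl$ you do not need the split at $|x|\sim1/l$; your crude $L^1$ bound with the factor $\|Y_l\|_\infty\sim l^{(n-2)/2}$ would be too lossy in general anyway. The bound $|J_\nu(z)|\le(z/2)^\nu/\Gamma(\nu+1)$, used on all of $0\le r\le2$, gives $|G_l(\rho)|\lesssim(2\pi)^\nu\rho^l/\Gamma(\nu+1)$. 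This is exponentially small in $l$ for $\rho\le cl$ once $c<1/(2\pi e)$.
\end{enumerate}
Your threshold $l/10$ is too large. With $\supp\chi\subset B(0,2)$, the turning point $2\pi|x||\xi|=\nu$ then lies at $|x|\approx 1.6$, where $\chi$ is generally nonzero, and $\widehat{g_l}$ is not small there. With these repairs your argument becomes a complete, self-contained alternative to the citation.
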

\begin{proof}
    By definition, we have 
    \begin{align*}
        \langle R(f),G\rangle&=\int_{S^{n-1}}\int_0^1f(r\omega)r^{k-1}drG(\omega)d\sigma^{n-1}(\omega)\\
        &=\int_0^1r^{k-1}\int_{S^{n-1}}f(r\omega)G(\omega)d\sigma^{n-1}(\omega)dr.
    \end{align*}
    By the polar coordinate formula,
    \begin{align*}
        \int_0^1r^{k-1}\int_{S^{n-1}}f(r\omega)G(\omega)d\sigma^{n-1}(\omega)dr=\int_{\R^n}f(x)|x|^{k-n}G(x/|x|)dx.
    \end{align*}
    Thus the dual operator $R^*$ is a weighted radial extension operator:
    \begin{align}
        R^*(G)(x)=|x|^{k-n}G(x/|x|),\;x\in\R^n.
    \end{align}
    It suffices to show 
    \begin{align}
        \|R^*(G)\|_{H^s(\R^n)}\lesssim\|G\|_{H^s(S^{n-1})},\;s<k-n/2.
    \end{align}
    This is a special case of \cite[Lemma 3]{BMS19}. This proof requires extensive computations on the Gagliardo-Sobolev norm, so we do not rewrite it here.
\end{proof}

For the operator $S$, we have the following.
\begin{lem}\label{Spherical-operator}
    For any $\epsilon>0$, we have
    \begin{align}\label{spherical-estimate}
    \|S(F)\|_{L^2(\mu)}\lesssim_{\epsilon}\|F\|_{H^{(k(n-k-1)+1-t)/2+\epsilon}(S^{n-1})}.
    \end{align}
\end{lem}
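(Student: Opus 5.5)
I would prove \eqref{spherical-estimate} by duality together with an energy (Schur test) argument on the sphere. The adjoint of $S$ is $S^*G=\int \sigma^{k-1}_V\,G(V)\,d\mu(V)$, a measure on $S^{n-1}$. So \eqref{spherical-estimate} is equivalent to
\begin{align*}
\|S^*G\|_{H^{-s}(S^{n-1})}\lesssim_\epsilon \|G\|_{L^2(\mu)},\qquad s=\tfrac{k(n-k-1)+1-t}{2}+\epsilon .
\end{align*}
We may assume $0<a:=n-1-2s<n-1$. The case $s\le 0$ also needs checking; there the bound is at least as strong as the positive-$s$ case with $s$ slightly above the threshold. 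For such $a$ the $H^{-s}(S^{n-1})$ norm of a measure $\nu$ is comparable to its Riesz energy $\iint |\omega-\omega'|^{-a}\,d\nu\,d\nu'$, using the standard expansion of $|\omega-\omega'|^{-a}$ in spherical harmonics with coefficients $\approx (1+\ell)^{-2s}$. Hence it suffices to bound the bilinear form $\iint I(V,W)G(V)G(W)\,d\mu(V)d\mu(W)$, where
\begin{align*}
I(V,W)=\int_{V\cap S^{n-1}}\int_{W\cap S^{n-1}}|\omega-\omega'|^{-a}\,d\sigma^{k-1}_W(\omega')\,d\sigma^{k-1}_V(\omega).
\end{align*}
Since $I\ge 0$ and $I$ is symmetric, Schur's test reduces everything to
\begin{align}\label{schur-proposal}
\sup_{V}\int I(V,W)\,d\mu(W)<\infty .
\end{align}

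The second step is a geometric estimate of $I(V,W)$ in terms of the principal angles $0\le\theta_1\le\dots\le\theta_k$ between $V$ and $W$. Since $2k>n$, we always have $\dim(V\cap W)\ge 2k-n\ge1$, so the two spheres always meet. Integrating first in $\omega'$ gives $\int_{W\cap S^{n-1}}|\omega-\omega'|^{-a}d\sigma_W(\omega')\approx d(\omega,W)^{k-1-a}$ when $a>k-1$, with $d(\omega,W)\approx|A_{V,W}\omega|$, where $A_{V,W}$ has singular values $\approx\theta_1,\dots,\theta_k$. So
\begin{align*}
I(V,W)\approx\int_{V\cap S^{n-1}}|A_{V,W}\omega|^{k-1-a}\,d\sigma(\omega).
\end{align*}
This can be estimated explicitly by a product of powers of the $\theta_i$, with $\epsilon$-losses at the borderline exponents. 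The integral is singular only near the directions where $A_{V,W}$ is small; the ones that matter are those with $\theta_1,\dots,\theta_j$ small.

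The third step, which I expect to be the main obstacle, is to combine this bound with the Frostman condition $\mu(B(W_0,r))\lesssim r^t$. I would decompose $\{W\}$ dyadically by the sizes $\theta_i\sim r_i$. For each configuration I would cover the corresponding region of $G(n,k)$ by balls of the smallest radius and count them using the local product structure of $G(n,k)$ near $V$, which has dimension $k(n-k)$ with coordinates $\mathrm{Hom}(V,V^\perp)$. I would then sum the resulting geometric series.

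The critical configuration should be the one where $W$ and $V$ share a common $(k-1)$-dimensional subspace up to scale $r$, that is, $\theta_1,\dots,\theta_{k-1}\lesssim r$. This is the configuration modelled by the hyperplane example, in which all $W$ lie in one $(n-1)$-space. In that configuration the $\mu$-mass is $\lesssim r^{t-k(n-k)+(k(n-k)-k(n-k-1)-1)}$, and the kernel gains exactly enough when $a<n-1-(k(n-k-1)+1-t)$. That condition is precisely $s>(k(n-k-1)+1-t)/2$. The $\epsilon$ absorbs the logarithmic sums.

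The naive approach freezes $\omega$ and uses only $\mu\{W:d(\omega,W)<r\}\lesssim r^{t-(k-1)(n-k)}$. That loses a factor $r^{-(k-1)}$ and gives only $s>(k(n-k)-t)/2$. Recovering this $k-1$ requires the joint integration over $\omega\in V$ in the second step, and this is exactly where the principal-angle bookkeeping has to be done carefully.
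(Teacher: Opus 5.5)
There is a genuine gap, and it sits in the case that matters. The lemma is used under the standing hypothesis $t>k(n-k-1)+1$, so for small $\epsilon$ the exponent $s=(k(n-k-1)+1-t)/2+\epsilon$ is \emph{negative}. This is exactly the case Theorem~\ref{maximal-estimate-delta-free} needs, since its right-hand sides are Sobolev norms of negative order. Your reduction to $0<a=n-1-2s<n-1$ assumes $s>0$, so it only covers the large-$\epsilon$ statements. Those follow from the small-$\epsilon$ ones, not conversely; as you note yourself, the $s\le 0$ bound is stronger, so it cannot be inferred from what you prove. Your method also cannot be pushed to $s\le 0$. There $a\ge n-1$, the kernel $|\omega-\omega'|^{-a}$ is not locally integrable on $S^{n-1}$, and $\|\cdot\|_{H^{-s}(S^{n-1})}$ has nonnegative order. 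So there is no Riesz-energy representation with a positive integrable kernel to feed into Schur's test. Indeed, the value you need, $a=t-k(n-k-1)+n-2-2\epsilon$, exceeds $n-1$ once $t>k(n-k-1)+1+2\epsilon$.

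Even for $s>0$, the constant-weight Schur test is too lossy once $n-k\ge 2$. Take $n=k+2$, so $\theta_1=\dots=\theta_{k-2}=0$, and look at your critical configuration $\theta_{k-1}\sim r$, $\theta_k\sim 1$. Your integral there gives $I(V,W)\approx r^{-(a-k)}$ for $k<a<k+1$. Now build $\mu$ as Lebesgue measure on a patch of the $(n-1)$-dimensional set $\{W:\dim(W\cap V_0)\ge k-1\}$, times a $(t-k-1)$-dimensional Frostman measure in the $k-1$ normal directions. This is $t$-Frostman and has $\mu\{W:\theta_{k-1}(V_0,W)\le r\}\approx r^{t-k-1}$. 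Then $\int I(V_0,W)\,d\mu(W)\approx\sum_r r^{t-1-a}=\infty$ for $a\ge t-1$, while you need $a=t-2\epsilon$. If $V_0\in\supp\mu$, lower semicontinuity makes the row integrals unbounded on a set of positive $\mu$-measure. So your claim that the kernel ``gains exactly enough'' is off by one unit in $a$: the Frostman condition cannot control the anisotropic kernel $I(V,W)$ row by row, even though the bilinear bound is true.

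The missing idea is to decouple the two effects, as the paper does.
\begin{enumerate}
\item Prove the gain $\|SG\|_{H^{(k-1)/2}(G(n,k))}\lesssim\|G\|_{L^2(S^{n-1})}$ for Haar measure. This follows because $S^*S$ has kernel $(1-\langle\omega,\eta\rangle^2)^{(k-n)/2}$, a Riesz kernel of order $k-1$.
\item Transport this to every Sobolev order, negative ones included, via the intertwining $S(I-\Delta_{S^{n-1}})^\sigma=(I-\Delta_{G(n,k)})^\sigma S$.
\item Apply the trace inequality $\|g\|_{L^2(\mu)}\lesssim\|g\|_{H^{(k(n-k)-t)/2+\epsilon}(G(n,k))}$. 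Here Schur's test is applied only to the isotropic kernel $|z-w|^{-t+2\epsilon}$, for which the Frostman condition suffices.
\end{enumerate}
The $k-1$ you tried to recover from the joint geometry thus comes for free at the Haar level, where no concentration is possible.
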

\begin{proof}
Roughly speaking, the estimate (\ref{spherical-average-s-gain}) says the spherical average operator increases the regularity by $(k-1)/2$, while the estimate (\ref{integration-fractal-s-loss}) says integration over $\mu$ decreases the regularity by $\frac{1}{2}(k(n-k)-t)$.

Note that both $S^{n-1}$ and $G(n,k)$ are $SO(n)$-homogeneous spaces, and 
the operator $S$ is $SO(n)$-equivariant: for any orthogonal map $T\in SO(n)$, we have  
\begin{align*}
    S(G\circ T)=S(G)\circ T.
\end{align*}
With compatible standard metric normalizations,  $S$ also transfers $\Delta_{S^{n-1}}$ on $S^{n-1}$ to $\Delta_{G(n,k)}$ on $G(n,k)$:
\begin{align*}
    S(\Delta_{S^{n-1}}G)=\Delta_{G(n,k)}S(G).
\end{align*}
We refer to \cite[Chapter II, \S2]{Hel99} for such a commutation argument.
Note that both $\Delta_{S^{n-1}}$ and $\Delta_{G(n,k)}$ are self-adjoint operators, a standard functional calculus argument gives
\begin{align}\label{Laplacian-Equivalent}
    S((I-\Delta_{S^{n-1}})^sG)=(I-\Delta_{G(n,k)})^sS(G),\;\forall s\in \R.
\end{align}
Thus we  can decompose the 
estimate (\ref{spherical-estimate}) into the following two estimates: 
\begin{align}\label{spherical-average-s-gain}
    \|S(G)\|_{H^{(k-1)/2}(G(n,k))}\lesssim \|G\|_{L^2(S^{n-1})}.
\end{align}
\begin{align}\label{integration-fractal-s-loss}
    \|g\|_{L^2(\mu)}\lesssim_{\epsilon} \|g\|_{H^{(k(n-k)-t)/2+\epsilon}(G(n,k))},\;\forall \epsilon>0.
\end{align}

We first deal with (\ref{spherical-average-s-gain}). The operator $S$ is well understood: by the Funk–Hecke type theorem for great-subsphere averages developed by \cite{Rub24}, $S$ maps each spherical harmonic to a Grassmannian harmonic, with scalar decay of order $-(k-1)/2$. Thus (\ref{spherical-average-s-gain}) holds.
For completeness, we give a sketch of a proof. 
By (\ref{Laplacian-Equivalent}), we can write  
\begin{align*}
    \|S(G)\|_{H^s(G(n,k))}^2=\langle (I-\Delta_{S^{n-1}})^{-s/2}S^*S(I-\Delta_{S^{n-1}})^{s/2}G, G\rangle_{S^{n-1}}.
\end{align*}
It suffices to consider the operator $S^*S$ and show that 
\begin{align}
    \|S^*S(G)\|_{H^{k-1}(S^{n-1})}\lesssim\|G\|_{L^2(S^{n-1})}
\end{align} We first compute $S^*$ and $S^*S$. We consider the fiber bundle
\begin{align*}
    M=\{(V,\omega)\in G(n,k)\times S^{n-1}:\omega\in V\cap S^{n-1}\}
\end{align*}
and the measure $v$ on $M$ given by
\begin{align}
    d\nu(V,\omega)=d\sigma_V^{k-1}(\omega)d\gamma_{n,k}(V).
\end{align}
Note that for any orthogonal map $T\in O(n)$,
\begin{align*}
    d\nu(V,\omega)=d\nu(TV,T\omega).
\end{align*}
Let $\pi_{S^{n-1}}$ denote the projection from $M$ to $S^{n-1}$, then the pushforward measure $(\pi_{S^{n-1}})_*\nu$ is a rotation-invariant measure on $S^{n-1}$. So
\begin{align*}
    (\pi_{S^{n-1}})_*\nu=c_{n,k}\cdot\sigma^{n-1}
\end{align*}
for some constant $c_{n,k}$ depending on $n,k$. We can rewrite $d\nu$ by
\begin{align}
    d\nu(V,\omega)=c_{n,k}\cdot d\gamma_{\omega}(V)d\sigma^{n-1}(\omega),
\end{align}
where $\gamma_{\omega}$ is the invariant measure on $\{V\in G(n,k):\omega\in V\}$. Then for $G\in L^2(S^{n-1})$, $g\in H^s$,
\begin{align*}
    \langle S(G),g\rangle=&\int_{M} S(G)(V)g(V)d\nu(V,\omega)\\
    =&c_{n,k}\int_{S^{n-1}}G(\omega)\int_{\{V\in G(n,k):\;\omega\in V\}}g(V)d\gamma_{\omega}(V)d\sigma^{n-1}(\omega).
\end{align*}
Thus 
\begin{align}
    S^*(g)(\omega)=c_{n,k}\int_{\{V\in G(n,k):\;\omega\in V\}}g(V)d\gamma_{\omega}(V)
\end{align}
and 
\begin{align}
    S^*S(G)(\omega)=c_{n,k}\int_{\{V\in G(n,k):\;\omega\in V\}}\int_{V\cap S^{n-1}}G(\eta)d\sigma_V^{k-1}(\eta)d\gamma_{\omega}(V).
\end{align}

Then we compute the kernel of $S^*S$. For $\eta\in V\cap S^{n-1}$ with $\omega\in V$, we can write
\begin{align*}
    \eta=u\omega+\sqrt{1-u^2}\theta,\;u=\langle \omega,\eta\rangle\in[-1,1],\; \theta \in \omega^{\bot}\cap S^{n-1}.
\end{align*}
By the formula for the change of coordinates,
\begin{align*}
    &\int_{V\cap S^{n-1}}G(\eta)d\sigma_V^{k-1}(\eta)\\
    =&\int_{-1}^1(1-u^2)^{(k-3)/2}\int_{\omega^{\bot}\cap V\cap S^{n-1}}G(u\omega+\sqrt{1-u^2}\theta)\;d\sigma^{k-2}_{\omega^{\bot}\cap V }(\theta)du.
\end{align*}
For $V$ containing $\omega$, we can write $V=\Span{\omega}\oplus U$ with $U\in G(\omega^{\bot},k-1)$. The measure $\gamma_{\omega}$ is just the invariant probability measure on $G(\omega^{\bot},k-1)$. We have 
\begin{align*}
    &S^*S(G)(\omega)\\
    =&c_{n,k}\int_{-1}^1(1-u^2)^{(k-3)/2}\int_{\{V\in G(n,k):\;\omega\in V\}}\int_{\omega^{\bot}\cap V\cap S^{n-1}}G(u\omega+\sqrt{1-u^2}\theta)\;d\sigma^{k-2}_{\omega^{\bot}\cap V }(\theta)d\gamma_{\omega}(V)du\\
    =&c_{n,k}\int_{-1}^1(1-u^2)^{(k-3)/2}\int_{G(\omega^{\bot},k-1)}\int_{U\cap S^{n-1}}G(u\omega+\sqrt{1-u^2}\theta)d\sigma^{k-2}_{U }(\theta)d\gamma_{n-1,k-1}(U)du.
\end{align*}
Similarly to the measure $\nu$, we can change the order:
\begin{align*}
    d\sigma^{k-2}_{U }(\theta)d\gamma_{n-1,k-1}(U)=d\gamma_{\theta}(U)d\sigma^{n-2}_{\omega}(\theta),
\end{align*}
where $\gamma_{\theta}$ is the invariant measure on $\{U\in G(\omega^{\bot},k-1):\theta\in U\}$.
We have 
\begin{align*}
    &\int_{G(\omega^{\bot},k-1)}\int_{U\cap S^{n-1}}G(u\omega+\sqrt{1-u^2}\theta)d\sigma^{k-2}_{U }(\theta)d\gamma_{n-1,k-1}(U)\\
    =&\int_{\omega^{\bot}\cap S^{n-1}}G(u\omega+\sqrt{1-u^2}\theta)\int_{\{U\in G(\omega^{\bot},k-1\}}d\gamma_{\theta}(U)d\sigma^{n-2}_{\omega^{\bot}}(\theta)\\
    =&\int_{\omega^{\bot}\cap S^{n-1}}G(u\omega+\sqrt{1-u^2}\theta)d\sigma^{n-2}_{\omega^{\bot}}(\theta).
\end{align*}
Thus 
\begin{align*}
    S^*S(G)(\omega)=c_{n,k}\int_{-1}^1(1-u^2)^{(k-3)/2}\int_{\omega^{\bot}\cap S^{n-1}}G(u\omega+\sqrt{1-u^2}\theta)d\sigma^{n-2}_{\omega^{\bot}}(\theta)du.
\end{align*}
We consider the change of variables $\eta=u\omega+\sqrt{1-u^2}\theta$, and get 
\begin{align}
    S^*S(G)(\omega)=c_{n,k}\int_{S^{n-1}}(1-\langle \omega,\eta\rangle^2)^{(k-n)/2}G(\eta)d\sigma^{n-1}(\eta).
\end{align}
The kernel is given by 
\begin{align}
    K(\omega,\eta)=(1-\langle \omega,\eta\rangle^2)^{(k-n)/2}=|\omega+\eta|^{k-n}\cdot|\omega-\eta|^{k-n}.
\end{align}
In local coordinates, it is either smooth or roughly a Riesz kernel of order $k-1$. Thus, $S^*S$ is bounded from $L^2$ to $H^{k-1}$.

The estimate (\ref{integration-fractal-s-loss}) follows from the assumption that $\mu$ is a $t$-dimensional Frostman measure with $t>k(n-k-1)+1$. It suffices to work in a local coordinate. Let $z\in\R^{k(n-k)}$ denote the coordinate. And let $s=(k(n-k)-t)/2+\epsilon$.
By definition,
\begin{align*}
    \|g\|_{L^2(\mu)}^2\lesssim \|(I+\Delta)^{s/2}g\|_{L^2}\cdot \|(I+\Delta)^{-s/2}(gd\mu)\|_{L^2}\lesssim\|g\|_{H^s}\cdot\|(I+\Delta)^{-s/2}(gd\mu)\|_{L^2}.
\end{align*}
And by the Plancherel formula,
\begin{align*}
    \|(I+\Delta)^{-s/2}(gd\mu)\|_{L^2}^2&=\int_{\R^{k(n-k)}}(1+|\xi|^2)^{-s}|\widehat{gd\mu}(\xi)|^2d\xi\\
    &\leq\int_{|\xi|\geq 1}|\xi|^{-2s}|\widehat{gd\mu}(\xi)|^2d\xi+\int_{|\xi|\leq 1}|\widehat{gd\mu}(\xi)|^2d\xi.
\end{align*}
For the second integral, we use a trivial estimate,
\begin{align*}
    |\widehat{gd\mu}(\xi)|\leq \int |g|d\mu\lesssim \|g\|_{L^2(\mu)}.
\end{align*}
For the first one, we use the fact that up to constants, the Fourier inversion of $|\xi|^{-2s}$ is $|z|^{-t+2\epsilon}$:
\begin{align*}
    \int_{|\xi|\geq 1}|\xi|^{-2s}|\widehat{gd\mu}(\xi)|^2d\xi\lesssim 
    \int_{\R^{k(n-k)}}\int_{\R^{k(n-k)}}|z-w|^{-t+2\epsilon}g(z)g(w)d\mu(z)d\mu(w).
\end{align*}
Using the inequality $|ab|\lesssim |a|^2+|b|^2$, and the symmetry of $|z-w|^{-t+2\epsilon}$, we have 
\begin{align*}
    &\int_{\R^{k(n-k)}}\int_{\R^{k(n-k)}}|z-w|^{-t+2\epsilon}g(z)g(w)d\mu(z)d\mu(w)\\
    \lesssim& \int_{\R^{k(n-k)}}|g(z)|^2\left(\int_{\R^{k(n-k)}}|z-w|^{-t+2\epsilon}d\mu(z)\right)d\mu(w).
\end{align*}
Recall that $\mu$ is a $t$-dimensional Frostman measure with $t>k(n-k-1)+1$, a standard dyadic argument gives
\begin{align*}
    \sup_{w}\int_{\R^{k(n-k)}}|z-w|^{-t+2\epsilon}d\mu(z)\lesssim_{\epsilon} 1.
\end{align*}
Thus
\begin{align*}
    \|(I+\Delta)^{-s/2}(gd\mu)\|_{L^2}^2\lesssim_{\epsilon} \|g\|_{L^2(\mu)}^2
\end{align*}
and we complete the proof of (\ref{integration-fractal-s-loss}).
\end{proof}

\subsection{Obstruction when $2k\le n$}\label{Obstruction when 2k_leq_n}
In this section, we show that our method fails when $2k \leq n$.
Our argument relies on Lemma \ref{Radial-operator}. That lemma requires the inequality $\beta > n/2 - k > 0$; when $2k \leq n$, we cannot obtain an $L^2$ bound in this case.
To confirm this obstruction, take $f = \mathbf{1}_{\{2\delta \leq |x| \leq 1\}} \cdot |x|^{-k},\delta>0$. This example demonstrates that the condition $p>n/k$ is necessary, regardless of the underlying measure $\mu$. Consequently, a uniform $L^2$-estimate is impossible whenever $2k \leq n$.

When $p=2$, by Lemma \ref{Borel-selection} and Lemma \ref{translation-remove}, we can reduce $L^2$-estimate of $\mathcal{M}^{n,k}_{\delta}$ to $L^2$-estimate of $C_{\delta}$. But when $p>2$, $L^p$-estimate of $C_{\delta}$ cannot imply $L^p$-estimate of $\mathcal{M}^{n,k}_{\delta}$. More precisely, we have the following examples.
\begin{prop}\label{translation-not-remove}
    For $2k\leq n$, there exists a $k(n-k-1)+1$-dimensional Frostman measure $\mu$ such that 
\begin{align*}
    \|C_{\delta}(f)\|_{L^p(\mu)}\lesssim \|f\|_{L^p(\R^n)},\;\forall p>n/k,
\end{align*}
but we can find functions $g_{\delta}$ such that for all $1\leq p<\infty$,
\begin{align*}
    \frac{\|\mathcal{M}^{n,k}_{\delta}(g_{\delta})\|_{L^p(\mu)}}{\|g_{\delta}\|_{L^p(\R^n)}}\to+\infty,\;\delta\to0.
\end{align*}
\end{prop}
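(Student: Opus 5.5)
The plan is to take $\mu$ to be a natural measure on $k$-planes lying in a one-parameter family of hyperplanes that all contain a fixed $(n-2)$-plane; this is the "$K\times\R^{n-2}$" example mentioned in the introduction. Let $P=\Span\{e_1,e_2\}$ and $e(\theta)=\cos\theta\, e_1+\sin\theta\, e_2$. Set $H_\theta=e(\theta)^{\bot}$, which equals $\ell_\theta\oplus P^{\bot}$ with $\ell_\theta=\Span\{e(\theta+\pi/2)\}$. Define
\begin{align*}
\mu=\int_0^{\pi}\gamma_{H_\theta,k}\,d\theta,
\end{align*}
where $\gamma_{H_\theta,k}$ is the invariant probability measure on $G(H_\theta,k)\subset G(n,k)$.

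\textbf{Step 1 (Frostman dimension).} The map $(\theta,U)\mapsto U$ is locally bi-Lipschitz in fibered coordinates. Away from the measure-zero set where $U$ misses the direction transverse to $P^{\bot}$, its image is a $1+k(n-1-k)$-dimensional Lipschitz family, and $\mu$ is comparable to smooth measure there. Hence $\mu(B(V,r))\lesssim r^{k(n-k-1)+1}$. To be safe, I would restrict $\mu$ to a compact piece where this parametrization is nondegenerate.

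\textbf{Step 2 (the bound for $C_\delta$).} First, since $\supp f\subset B(0,1)$, we have $C_\delta f\lesssim C(f*\phi_\delta)$, so it suffices to bound $C=SR$ on $L^p$. Second, Hölder in $r$ gives
\begin{align*}
|Rf(\omega)|^p\lesssim_p\int_0^1|f(r\omega)|^p r^{n-1}\,dr\quad\text{exactly when } p>n/k,
\end{align*}
because the conjugate integral $\int_0^1 r^{(k-1-(n-1)/p)p'}\,dr$ converges iff $p>n/k$. Third, Jensen gives $|SF|^p\le S(|F|^p)$, and $\int S(|F|^p)\,d\mu=\int_{S^{n-1}}|F|^p\,w\,d\sigma$. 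Here $w$ is the density of the pushforward of the incidence measure. The coarea computation $|\partial_\theta(x\cdot e(\theta))|=|(x_1,x_2)|$ gives $w(x)\sim |(x_1,x_2)|^{-1}$ for $x\in S^{n-1}$. Combining these yields
\begin{align*}
\|C f\|_{L^p(\mu)}^p\lesssim\int|f(x)|^p\,w(x/|x|)\,dx .
\end{align*}

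\textbf{The main obstacle} is that $w$ is unbounded near $P^{\bot}$, and one cannot localize away from $P^{\bot}$, because every $V\subset H_\theta$ meets $P^{\bot}$ in dimension $\ge k-1$. My first attempt is not to apply Jensen crudely. Instead I would exploit the fact that for fixed $\theta$ the inner average over $U\in G(H_\theta,k)$ is itself an $SO(H_\theta)$-invariant spherical-type operator. Its kernel, computed as in the proof of Lemma \ref{Spherical-operator}, behaves like $(1-\langle\omega,\eta\rangle^2)^{(k-n+1)/2}$ inside $H_\theta$. Averaging this kernel in $\theta$ should smear the singularity transversally to $P^{\bot}$, and a Schur test with power weights $|(x_1,x_2)|^{a}$ should then give boundedness $L^p(S^{n-1})\to L^p(\mu)$. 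If this fails, I would instead shrink $\mu$ to a nondegenerate curve of hyperplane normals, for which the common intersection is trivial and $w$ is bounded.

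\textbf{Step 3 (blow-up of the maximal operator).} Let $K\subset\R^2$ be a planar Besicovitch set containing a unit segment in each direction $\ell_\theta$, and let $K_\delta$ be its $\delta$-neighborhood in the plane. By Besicovitch's construction, $|K_\delta|\to0$ as $\delta\to0$. Set
\begin{align*}
g_\delta=1_{K_\delta\times B_{\R^{n-2}}(0,1)} .
\end{align*}
Every $V\in\supp\mu$ satisfies $V\subset H_\theta=\ell_\theta\oplus P^{\bot}$ for some $\theta$. So some translate $x+\big(N_\delta(\ell_\theta\text{-segment})\times B_{\R^{n-2}}(0,1)\big)$ lies in the support of $g_\delta$ and contains a $\delta$-tube around a unit $k$-disk of $V$. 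Hence $\mathcal M^{n,k}_\delta g_\delta(V)\gtrsim1$ on $\supp\mu$, and $\|\mathcal M^{n,k}_\delta g_\delta\|_{L^p(\mu)}\gtrsim1$. On the other hand, $\|g_\delta\|_{L^p}=|K_\delta|^{1/p}\to0$. Therefore the ratio tends to $+\infty$ for every $1\le p<\infty$.
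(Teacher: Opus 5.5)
\textbf{Where you agree with the paper.} Your family of planes and your Step 3 are essentially the paper's. The paper's $V_{A,B,C}=\{(y,s,Ay+Bs,Cs)\}$ are exactly $k$-planes lying in the pencil of hyperplanes $\{t=Cs\}$ through the $(n-2)$-plane $\{s=t=0\}$. The constraint $\|A\|,\|B\|,|C|\le 1$ cuts out a compact piece that is uniformly transverse to that $(n-2)$-plane. The blow-up then uses $K\times(\text{ball})$ just as you do.

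\textbf{Step 1.} The restriction you mention is mandatory, not a safety measure. Take $V_0\subset P^{\bot}$, which is possible since $k\le n-2$. Then $V_0\subset H_\theta$ for every $\theta$, so $\mu(B(V_0,r))\sim r^{k(n-k-1)}$. Hence the unrestricted $\mu$ is not $(k(n-k-1)+1)$-Frostman.

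\textbf{The gap is Step 2.} This is half of the proposition, and it is not proved. Your chain (H\"older in $r$ against $r^{n-1}$, then Jensen on $V\cap S^{n-1}$) correctly yields only $\int|f|^p\,|\pi_P(x/|x|)|^{-1}\,dx$, which does not bound $\|f\|_{L^p}^p$. The Schur-test idea is only asserted to ``should'' work. The fallback of moving the normals off a fixed $2$-plane fails for two reasons. It destroys the common $P^{\bot}$ that Step 3 relies on. Also, the incidence density is not bounded there either: it blows up at fold points of $\theta\mapsto\langle\omega,e(\theta)\rangle$.

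\textbf{Why the obstacle appears.} It is an artifact of averaging each plane with an unweighted Jensen. That step discards the fact that $|\pi_P x|$ vanishes on $V$ only along the hyperplane $V\cap P^{\bot}$ of $V$.

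\textbf{The missing idea.} Absorb the singular density on each plane \emph{before} integrating over $\mu$.
\begin{enumerate}[(i)]
\item For $V$ in the uniformly transverse piece, $|\pi_P x|\sim|s|$ for a linear coordinate $s$ on $V$.
\item Apply H\"older on $V$ with weight $w(x)=|x|^{n-k-1}|s|$ instead of the radial weight.
\item Then $\int_{V\cap B(0,1)}w^{-1/(p-1)}\,d\mathcal{H}^k<\infty$ exactly when $p>n/k$ (radial part) and $p>2$ (the factor $|s|^{-1/(p-1)}$). Both hold because $p>n/k\ge 2$. This is where $2k\le n$ enters, and your argument never uses it.
\item Integrate $w\,|f(x,Mx,Cs)|^p$ over the parameters. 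The Jacobians $|x|^{-(n-k-1)}$ from $M^j\mapsto x_jM^j$ and $|s|^{-1}$ from $C\mapsto Cs$ are cancelled exactly by $w$. This gives $\int C(f)^p\,d\mu\lesssim\|f\|_{L^p}^p$.
\end{enumerate}
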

That is, when $2k\leq n$, to estimate the $L^p$-norm of  $\mathcal{M}^{n,k}_{\delta}$, we cannot ignore the effect of the translation $V\mapsto x_V$.  
\begin{proof}
    We first construct the measure $\mu$.
For $x\in\R^n$, we write $x=(y,s,z,t)\in\R^{k-1}\times \R\times\R^{n-k-1}\times \R$.
We consider the subspaces of the form
\begin{align*}
    V_{A,B,C}=\{(y,s,Ay+Bs,Cs):y\in\R^{k-1},s\in\R\},
\end{align*}
where $A\in\R^{(n-k-1)\times(k-1)}$, $B\in\R^{n-k-1}$, $C\in\R$.
Let $m$ be the Lebesgue measure on $\{(A,B,C):\|A\|,\|B\|,\|C\|\leq1\}$, and let $\mu$ be the pushforward measure of $m$ under the map $(A,B,C)\mapsto V_{A,B,C}$. Clearly $\mu$ is a $k(n-k-1)+1$-dimensional Frostman measure.

Then we construct the functions $g_{\delta}$. Let $K$ be a compact planar Kakeya set with zero Lebesgue measure, and let 
\begin{align*}
    E=\{(y,s,z,t):|y|\leq 1,|z|\leq 2,(s,t)\in K\}.
\end{align*}
By Fubini's theorem, $|E|=0$.
For $|C|\leq 1$, by construction of $K$, there exist $s_C$, $t_C$ such that
\begin{align*}
    \{(s_C+\tau,t_C+C\tau):|\tau|\leq 1/4\}\subset K.
\end{align*}
Then the set
\begin{align*}
    \{(y,s_C+\tau,Ay+B\tau,t_C+C\tau):|y|\leq 1,|\tau|\leq 1/4\}
\end{align*}
is contained in the intersection of $E$ and a translate of $V_{A,B,C}$. Let 
\begin{align*}
    g_{\delta}=1_{\mathcal{N}_{\delta}(E)},
\end{align*}
we have 
\begin{align*}
    \frac{\|\mathcal{M}^{n,k}_{\delta}(g_{\delta})\|_{L^p(\mu)}}{\|g_{\delta}\|_{L^p(\R^n)}}\gtrsim |\mathcal{N}_{\delta}(E)|^{-1/p}.
\end{align*}
And since $|E|=0$, $\lim_{\delta\to0}|\mathcal{N}_{\delta}(E)|^{-1/p}=+\infty$.

Finally, we show the $L^p$-estimate for $C_{\delta}$. It suffices to consider $C$. Since $\|A\|\leq1, \|B\|\leq 1,\|C\|\leq 1$, the Jacobian of the map $(y,s)\mapsto (y,s,Ay+Bs,Cs)$ is bounded. We have 
\begin{align}
    C(f)(V_{A,B,C})\sim\int f(y,s,Ay+Bs,Cs)dyds.
\end{align}
Let $x=(y,s)$ and $M=(A,B)$.
Let $F(x)=f(x,Mx,Cs)$, and let $w(x)=|x|^{n-k-1}|s|$. We choose this weight to cancel the Jacobian determinant that appears in integrations on $M,C$. We will see this later. By H\"older's inequality,
\begin{align*}
    \left(\int|F(x)|dx\right)^p\leq \left(\int |F(x)|^pw(x)dx\right) \left(\int_{|x|\leq 1} w(x)^{-1/(p-1)}dx\right)^{p-1}.
\end{align*}
For the second integral, we consider the polar coordinates: $x=r\theta, s=r\theta'$, where $\theta'$ is the last coordinate of $\theta$. Then 
\begin{align*}
    \int_{|x|\leq 1} w(x)^{-1/(p-1)}dyds\leq \int_0^1r^{k-1-(n-k)/(p-1)}\int_{S^{k-1}}|\theta'|^{-1/(p-1)}d\sigma^{k-1}(\theta)dr.
\end{align*}
Working in local coordinates, we see that $\int_{S^{k-1}}|\theta'|^{-1/(p-1)}d\sigma^{k-1}(\theta)\lesssim_{p,k}1$ if $p>2$. And by a direct computation $\int_0^1r^{k-1-(n-k)/(p-1)}dr\lesssim_{k,p}1$ if $p>n/k$. Hence
\begin{align}
    C(f)(V_{A,B,C})^p\lesssim_{k,p}\int |f(x,Mx,Cs)|^pw(x)dx.
\end{align} Now we integrate $M, C$. Fix $x$. We choose $j$ such that $|x_j|\gtrsim |x|$. Let $M^j$ be the $j$-column of $M$. For fixed $x$, the map $M^j\mapsto z=x_jM^j$ is a linear transformation on $\R^{n-k-1}$ with determinant $|x_j|^{n-k-1}$. Thus 
\begin{align*}
    \int|f(x,Mx,Cs)|^pdM^j\lesssim&|x|^{-(n-k-1)}\int_{|z|\lesssim 1} |f(x,z+\sum_{i\neq j}M^ix_i,Cs)|^pdz\\
    \leq& |x|^{-(n-k-1)}\int |f(x,z,Cs)|^pdz.
\end{align*}
Hence 
\begin{align*}
    \int|f(x,Mx,Cs)|^pdM\lesssim|x|^{-(n-k-1)}\int |f(x,z,Cs)|^pdz.
\end{align*}
Integrating over $C$, we have 
\begin{align*}
    \int\int|f(x,Mx,Cs)|^pdMdC\lesssim&|x|^{-(n-k-1)}|s|^{-1}\int |f(x,z,t)|^pdzdt\\
    =&w(x)^{-1}\int |f(x,z,t)|^pdzdt.
\end{align*}
Finally, multiply it by $w(x)$ and integrate over $x$,  we get 
\begin{align}
     \int \int\int C(f)(V_{A,B,C})^pdAdBdC\lesssim_{k,p}\int |f(x,z,t)|^pdxdzdt.
\end{align}
\end{proof}

\section{Maximal Operators, $2k\leq n$}
\label{maximal-function-2k-leq-n}
In this section, we prove Theorem \ref{Kakeya-imply-(n,k,t)}.
Suppose $m,n,k$ are positive integers with $n-k=m$ and $k\geq 2$. Let $t>k(n-k-1)+1$. Suppose that for some $0<\epsilon<t-k(n-k-1)-1$ and some $2\leq p<\infty$, we have 
    \begin{align}\label{maximal_hypothesis}
        \|\mathcal{M}^{m+1,1}_{\delta}(g)\|_{L^p(G(m+1,1))}\lesssim\delta^{-\epsilon/p}\|g\|_{L^p(\R^{m+1})}.
    \end{align}
    
We can assume that $f\geq 0$. We consider the Littlewood-Paley decomposition:
\begin{align*}
    f=\sum_{j\geq 0}f_j,
\end{align*}
where $\supp\widehat{f_0}\subset\{|\xi|\leq 2\}$ and $\supp\widehat{f_j}\subset\{2^{j-1}|\xi|\leq 2^{j+1}\}$ for $j\geq 1$.
To use the Fourier support property of 
$f_j$, we need to consider the signed maximal operators, where the absolute value is taken outside the integral:
\begin{align*}
    \tilde{\mathcal{M}}^{n,k}_{\delta}(g)(V)=\frac{1}{\delta^{n-k}}\cdot\sup_{x\in \R^n}\left|\int_{\mathcal{N}_{\delta}(x+V)}g\right|.
\end{align*}
and 
\begin{align*}
    \tilde{\mathcal{M}}^{n,k}(g)(V)=\sup_{x\in \R^n}\left|\int_{x+V}g\right|.
\end{align*}
Since $f\geq 0$, we have 
\begin{align*}
    \mathcal{M}^{n,k}(f)(V)\leq \sum_{j\geq0}\tilde{\mathcal{M}}^{n,k}(f_j)(V).
\end{align*}
By the locally constant property of $f_j$, we have 
\begin{align*}
    \tilde{\mathcal{M}}^{n,k}(f_j)\lesssim\tilde{\mathcal{M}}^{n,k}_{2^{-j}}(f_j)
\end{align*}

It suffices to consider $\tilde{\mathcal{M}}^{n,k}_{2^{-j}}(f_j)$. Choose a maximal $2^{-j}$-separated subset $\mathcal{V}$ of $G(n,k)$. If $d(V,W)<2^{-j}$, then  
\begin{align*}
    \tilde{\mathcal{M}}^{n,k}_{2^{-j}}(f_j)(V)\lesssim \tilde{\mathcal{M}}^{n,k}_{2^{-j}}(f_j)(W)+\text{RapDec}(2^{j})\|f\|_{L^P}.
\end{align*}
Here the $\text{RapDec}(2^{j})$ satisfies
\begin{align*}
    \text{RapDec}(2^{j})\lesssim_{N}2^{-jN},\;\forall N\geq 1.
\end{align*}
We use the fact that $f$ is supported by the unit ball and $f_j=f*\psi_j$ for some Schwartz function $\psi_j$ with $\supp\widehat{\psi_j}\subset B(0,2^j)$.
Thus, we have 
\begin{align*}
    \int\tilde{\mathcal{M}}^{n,k}_{2^{-j}}(f_j)(V)^pd\mu(V)&\lesssim \sum_{W\in\mathcal{V}}\tilde{\mathcal{M}}^{n,k}_{2^{-j}}(f_j)(W)^p\cdot \mu(B(W,2^{-j}))+\text{RapDec}(2^{j})\|f\|_{L^p}^p\\
    &\lesssim 2^{j(k(n-k)-t)}\int\tilde{\mathcal{M}}^{n,k}_{2^{-j}}(f_j)(V)^pd\gamma_{n,k}(V)+\text{RapDec}(2^{j})\|f\|_{L^p}^p.
\end{align*}

Now it suffices to consider $\|\tilde{\mathcal{M}}^{n,k}_{2^{-j}}(f_j)\|_{L^p(G(n,k))}^p$. We claim that 
\begin{align}\label{maximal-estimate-fourier-support}
    \|\tilde{\mathcal{M}}^{n,k}_{2^{-j}}(g)\|_{L^p(G(n,k)}^p\lesssim 2^{-j(k-1-\epsilon)}\|g\|^p_{L^p(\R^n)},\;\supp\widehat{g}\subset\{2^{j-1}\leq |\xi|\leq 2^{j+1}\}.
\end{align}

 Fix $j$, let $\delta=2^{-j}$.   Our idea is induction on $(n,k)$. When $k=1$ and $n=m+1$, (\ref{maximal-estimate-fourier-support}) follows by (\ref{maximal_hypothesis}). Suppose that it holds for $\mathcal{M}^{n-1,k-1}_{2^{-j}}$.
If $V=L\oplus U$, for $L\in G (n,1)$ and $U\in G(L^{\bot},k-1)$, by Fubini's theorem, we have  
    \begin{align}
        \tilde{\mathcal{M}}^{n,k}_{\delta}(g)(V)=\tilde{\mathcal{M}}^{n-1,k-1}_{\delta}(\mathcal{X}(g)(L,\cdot))(U),
    \end{align}
where $\mathcal{X}(g)(L,y)$ is the $X$-ray transform defined by 
\begin{align*}
    \mathcal{X}(g)(L,y)=\int_L g(y+z)d\H^1(z),\;y\in L^{\bot}.
\end{align*}
Recall that we have a recursive formula for $\gamma_{n,k}$:
    \begin{align*}
        \gamma_{n,k}(A)=\int_{G(n,1)}\gamma_{n-1,k-1}(\{U\subset L^{\bot}: L+U\in A\})d\gamma_{n,1}(L),
    \end{align*}
    then we can rewrite $\|\tilde{\mathcal{M}}^{n,k}_{\delta}(g)\|_{L^p(G(n,k))}^p$:
    \begin{align}\label{induction-identity}
        &\|\tilde{\mathcal{M}}^{n,k}_{\delta}(g)\|_{L^p(G(n,k))}^p\\
        =&\int_{G(n,1)}\int_{G(L^{\bot},k-1)}|\tilde{\mathcal{M}}^{n-1,k-1}_{\delta}(\mathcal{X}(g)(L,\cdot))(U)|^p d\gamma_{n-1,k-1}(U)d\gamma_{n,1}(L).\nonumber
    \end{align}
Note that for fixed $L$, the Fourier transform of the function $\mathcal{X}(g)(L,y)$ is supported on the annulus $\{2^{j-1}\leq |\xi|\leq 2^{j+1}\}$, and 
\begin{align*}
    \int_{G(n,1)}\int_{L^{\bot}} |\mathcal{X}(g)(L,y)|^2d\H ^{n-1}yd\gamma_{n,1}(L)\sim\int |\widehat{g}(\xi)|^2|\xi|^{-1}\lesssim 2^{-j}\|g\|_{L^2(\R^n)}^2.
\end{align*}
Interpolating it with the trivial $L^{\infty}$-estimate, we have 
\begin{align*}
    \int_{G(n,1)}\int_{L^{\bot}} |\mathcal{X}(g)(L,y)|^pd\H ^{n-1}yd\gamma_{n,1}(L)\lesssim 2^{-j}\|g\|_{L^p(\R^n)}^p
\end{align*}
Then, by our inductive hypothesis, we have 
\begin{align*}
    \|\tilde{\mathcal{M}}^{n,k}_{\delta}(g)\|_{L^p(G(n,k))}^p\lesssim& 2^{-j(k-2-\epsilon)} \int_{G(n,1)}\int_{L^{\bot}} |\mathcal{X}(g)(L,y)|^pd\H ^{n-1}yd\gamma_{n,1}(L)\\
    \lesssim&2^{-j(k-1-\epsilon)} \|g\|_{L^p(\R^n)}^p.
\end{align*}

We return to $\mathcal{M}^{n,k}(f)$. We have 
\begin{align*}
    \|\mathcal{M}^{n,k}(f)\|_{L^p(\mu)}\lesssim& \sum_{j\geq 0}\|\tilde{\mathcal{M}}^{n,k}_{2^{-j}}(f_j)\|_{L^p(\mu)}+\|f\|_{L^p(\R^n)}\\
    \lesssim& \sum_{j\geq 0}2^{j(k(n-k)-t)/p}\|\tilde{\mathcal{M}}^{n,k}_{2^{-j}}(f_j)\|_{L^p(G(n,k))}+\|f\|_{L^p(\R^n)}\\
    \lesssim& \sum_{j\geq 0}2^{j(k(n-k-1)+1-t+\epsilon)/p}\|f\|_{L^p(\R^n)}
    \\\lesssim&\|f\|_{L^p(\R^n)}.
\end{align*}
Here the geometric series $\sum_{j\geq 0}2^{j(k(n-k-1)+1-t)/p}$ converges since  $t>k(n-k-1)+1+\epsilon$.

\section{$(n,k,t)$-sets}\label{restricted-(n,k)-section} 
In this section, we first show that  $L^p$-$L^q$ estimates of $\mathcal{M}^{n,k}$ for some $1\leq p,q<\infty$ imply positive measure of $(n,k,t)$-sets, and then explain why $t>k(n-k-1)+1$ is necessary by explicit examples.

\subsection{From maximal estimates to positive measure}\label{proof-of-main-maximal}
Suppose that for some $1\leq p,q<\infty$, we have 
\begin{align}\label{Lp-Lq-estimate}
    \|\mathcal{M}^{n,k}(f)\|_{L^q(\mu)}\lesssim \|f\|_{L^{p}(\R^n)}
\end{align}
for any $t$-dimensional Frostman measure $\mu$. Then we show that $(n,k,t)$-sets have positive measure. The idea is a simple pigeonholing argument.

Suppose $B$ is an $(n,k,t)$-set associated to $\mathcal{V}$ with $\dim\mathcal{V}= t>k(n-k-1)+1$. We can assume that $\H^t_{\infty}(\mathcal{V})>0.$

By definition of Lebesgue measure, we can find a $G_{\delta}$-set $G$ such that $B\subset G$ and $|B|=|G|$. So we can assume that $B$ is a $G_{\delta}$-set. We write 
\begin{align*}
    B=\bigcup_{m\geq 1}B\cap B(0,m)=\bigcup_{m\geq 1}B_m.
\end{align*}
By definition, we have
\begin{align*}
    \mathcal{M}^{n,k}(1_{B})(V)\gtrsim 1.
\end{align*}
Then there exists the smallest $m=m(V)$ such that
\begin{align*}
    \mathcal{M}^{n,k}(1_{B_m})(V)\gtrsim 1/m^2.
\end{align*}
By definition, we have
\begin{align*}
    \mathcal{V}\subset\bigcup_{m\geq1}G_m,
\end{align*}
where
\begin{align*}
    G_m=\{V\in G(n,k):\mathcal{M}^{n,k}(1_{B_m})(V)\gtrsim 1/m^2\}.
\end{align*}
Then by the pigeonhole principle and the countable subadditivity of Hausdorff content, there exists $G_m$ such that $\H^t_{\infty}(G_m)>0$. Since $B_m$ is a bounded $G_{\delta}$ set, the set $G_m$ is Borel. By Frostman's lemma, there exists a $t$-dimensional Frostman measure $\mu$ supported on $G_m$. We apply the estimate   (\ref{Lp-Lq-estimate}) to $f=1_{B_m}$ and the measure $\mu$, we have 
\begin{align*}
     0<1/m^{2q}\lesssim\|\mathcal{M}^{n,k}(f)\|_{L^q(\mu)}^q\lesssim
        \|f\|_{L^p}^q=|B_m|^{q/p}\leq |B|^{q/p}.
\end{align*}
\subsection{$(n,k,k(n-k-1)+1)$-sets with Lebesgue measure zero}\label{examples}
We claim that there exists a $(n,k,k(n-k-1)+1)$-set with Lebesgue measure zero. We give two constructions.

The first approach considers all $k$-dimensional subspaces contained in an \((n-1)\)-dimensional subspace, and then applies a one-dimensional perturbation of measure zero. We choose a compact subset $E\subset\R$ such that $\dim E=1$ and $|E|=0$. Then we take
    \begin{align*}
        A=\{y=(y_i^j)_{1\leq i\leq n-k,\;1\leq j\leq k}\in\R^{k(n-k)}:y_{n-k}^{1}\in E, \;y_{n-k}^{j}=0 ,\;\forall\;2\leq j\leq k\}.
    \end{align*}
     It is clear that $\dH A=k(n-k-1)+1$.
   For each $y\in A$, we define 
    \begin{align*}
        V_y=\left\{(x_1,\cdots,x_k,\sum_{j=1}^ky_{1}^{j}x_j,\cdots,\sum_{j=1}^ky_{n-k}^{j}x_j):x=(x_1,\cdots,x_k)\in\R^k\right\}
    \end{align*}
    and consider the set
    \begin{align*}
        B=\bigcup_{y\in A}V_y.
    \end{align*}
     By definition, $B$ is an $(n,k,k(n-k-1)+1)$-set.
    By Fubini's theorem, 
    \begin{align*}
        |B|&=\int_{\R^k}\H^{n-k}(B_x)d\H^k(x),
    \end{align*}
    where $B_x$ is the $x$-slice of $B$, given by 
    \begin{align*}
        B_x=\{y\in\R^{n-k}:(x,y)\in B\}=\left\{(\sum_{j=1}^ky_{1}^{j}x_j,\cdots,\sum_{j=1}^ky_{n-k}^{j}x_j):y\in A\right\}.
    \end{align*}
    By our definition of $A$, we have 
    \begin{align*}
        B_x\subset \R^{n-k-1}\times x_1E.
    \end{align*}
    Since $|E|=0$, by Fubini's theorem, $\H^{n-k}(B_x)=0$. Then $|B|=0$,
     $B$ is a $(n,k,k(n-k-1)+1)$-set with Lebesgue measure zero.

The second approach considers a Cartesian product construction based on classical planar Kakeya sets. We take a planar Kakeya set $K\subset \R^2$ with zero measure that contains a length-two line segment in every direction.  And  we take
\begin{align*}
    B=K\times\R^{n-2}.
\end{align*}
By Fubini's theorem, $|B|=0$. It suffices to check $B$ is a $(n,k,k(n-k-1)+1)$-set.
We take $U=\{0\}\times\R^{n-2}$ and consider 
\begin{align*}
    \Sigma=\{V\in G(n,k):\dim V\cap U=k-1\}.
\end{align*}
The set $\Sigma$ is a smooth submanifold of dimension $k(n-k-1)+1$. We claim that $B$ contains a translation of the unit disk of every element in $\Sigma$. 

Suppose $V\in \Sigma$. For convenience,  We consider the orthogonal decomposition
\begin{align*}
    V=(V\cap U)\oplus L.
\end{align*}
We take a unit vector $l\in L$, then the unit disk of $V$ is given by 
\begin{align*}
    D_V=\{u+sl:u\in V\cap U,\;\|u\|^2+s^2\leq 1\}.
\end{align*}
Its projection onto $U^{\bot}$ is given by 
\begin{align*}
    \pi_{U^{\bot}}(D_V)=\{s\pi_{U^{\bot}}l:|s|\leq 1\}.
\end{align*}
This is a line segment with length $\leq 2$. By definition of $K$, we can choose a translation $x_V\in \R^2$ such that 
\begin{align*}
    \pi_{U^{\bot}}(D_V)+x_V\subset K\times\{0\}.
\end{align*}
Then 
\begin{align*}
    D_V+(x_V,0)\subset B.
\end{align*}

\bibliographystyle{abbrv}
    \bibliography{bibtex}

\end{document}